\pdfoutput=1
\RequirePackage{ifpdf}
\ifpdf 
\documentclass[pdftex]{sigma}
\else
\documentclass{sigma}
\fi

\usepackage{mathtools}
\usepackage{enumerate}

\numberwithin{equation}{section}

\newtheorem{Theorem}{Theorem}[section]
\newtheorem{Corollary}[Theorem]{Corollary}
\newtheorem{Lemma}[Theorem]{Lemma}
\newtheorem{Proposition}[Theorem]{Proposition}
 { \theoremstyle{definition}
\newtheorem{Definition}[Theorem]{Definition}
\newtheorem{Example}[Theorem]{Example}
\newtheorem{Remark}[Theorem]{Remark} }

\begin{document}

\allowdisplaybreaks

\newcommand{\arXivNumber}{1610.09898}

\renewcommand{\thefootnote}{}

\renewcommand{\PaperNumber}{022}

\FirstPageHeading

\ShortArticleName{$G$-Invariant Deformations of Almost-Coupling Poisson Structures}

\ArticleName{$\boldsymbol{G}$-Invariant Deformations of Almost-Coupling\\ Poisson Structures\footnote{This paper is a~contribution to the Special Issue ``Gone Fishing''. The full collection is available at \href{http://www.emis.de/journals/SIGMA/gone-fishing2016.html}{http://www.emis.de/journals/SIGMA/gone-fishing2016.html}}}

\Author{Jos\'e Antonio VALLEJO~$^\dag$ and Yury VOROBIEV~$^\ddag$}

\AuthorNameForHeading{J.A.~Vallejo and Yu.~Vorobiev}

\Address{$^\dag$~Facultad de Ciencias, Universidad Aut\'onoma de San Luis Potos\'i, M\'exico}
\EmailD{\href{mailto:jvallejo@fc.uaslp.mx}{jvallejo@fc.uaslp.mx}}
\URLaddressD{\url{http://galia.fc.uaslp.mx/~jvallejo/}}

\Address{$^\ddag$~Departamento de Matem\'aticas, Universidad de Sonora, M\'exico}
\EmailD{\href{mailto:yurimv@guaymas.uson.mx}{yurimv@guaymas.uson.mx}}

\ArticleDates{Received October 31, 2016, in f\/inal form March 28, 2017; Published online April 02, 2017}

\Abstract{On a foliated manifold equipped with an action of a compact Lie group $G$, we study a class of almost-coupling Poisson and Dirac structures, in the context of deformation theory and the method of averaging.}

\Keywords{Poisson geometry; Dirac structures; deformation; averaging}

\Classification{53D17; 70G45; 58H15}

\renewcommand{\thefootnote}{\arabic{footnote}}
\setcounter{footnote}{0}

\section{Introduction}

In this paper, we develop further the results of \cite{JVYV-14,Vor-08}, on the construction of invariant Poisson and Dirac structures via the averaging method on foliated Poisson manifolds with symmetry in the context of deformation theory.

For a Poisson bivector f\/ield $\Pi$ on a foliated manifold $(M,\mathcal{F})$, the $\mathcal{F}$-\textit{almost-coupling proper\-ty}~\cite{Va-04} means the existence of a~normal bundle structure $\mathbb{H}$ of $\mathcal{F}$, such that in the $\mathbb{H}$-dependent bigraded decomposition of $\Pi$, the mixed term of bidegree $(1,1)$ vanishes. In particular, such class of Poisson structure contains the $\mathcal{F}$-coupling Poisson structures which play important roles in some problems of semi-local Poisson geometry \cite{CrFe,CrMa-12,Va-04, Vo-01}. Moreover, the (almost) coupling constructions can be naturally extended to the Dirac category \cite{BrF, Va-06,Wa}.

Now, starting with a foliated Poisson manifold $(M,\mathcal{F},P)$, equipped with a~leaf preserving action of a compact connected Lie group $G$, our point is to study, in the context of the averaging method, some deformations $\{\Pi_{\varepsilon }\}_{\varepsilon\in [0,1]}$ of the leaf-tangent\footnote{Following \cite{Va-06}, a~leaf-tangent (Poisson) bivector f\/ield on a foliated manifold is a section of $\bigwedge^2 T\mathcal{F}$.} Poisson bivector f\/ield $P$ in the class of the $\mathcal{F}$-\textit{almost-coupling Poisson structures} $\Pi_{\varepsilon}$. Our approach is based on the averaging technique~\cite{JVYV-14} related to a class of exact gauge transformations for Poisson and Dirac structures. The idea of the construction of $G$-invariant Poisson structures is to follow the path:
\begin{gather*}
\text{Poisson}\xrightarrow[]{\text{averaging}}G\text{-invariant Dirac}\xrightarrow[]{\text{non-degeneracy}}G\text{-invariant Poisson}.
\end{gather*}

For the case of a $G$-action which is locally Hamiltonian relative to $P$, we give some results about the realization of the above scheme for a class of $\mathcal{F}$-almost-coupling Poisson deformations of $P$. Within the framework of perturbation theory, these results can be applied to the study of invariant normal forms for Hamiltonian systems of adiabatic type \cite{ArKN-88}, associated to deformations of Poisson structures \cite{MJYu-13,MYU-16}.

In a generalized setting, by a Hamiltonian system of adiabatic type on a Poisson foliation $(M,\mathcal{F},P)$ with $G$-symmetry, we mean a Hamiltonian system relative to a deformed Poisson structure $\Pi_{\varepsilon}=P+\varepsilon \Lambda$ and a function $F\in C^{\infty}(M)$. The deformation of $P$ is given by a bivector f\/ield $\Lambda\in\Gamma(\wedge^{2}\mathbb{H})$, where $\mathbb{H}\subset TM$ is a normal bundle of the foliation $\mathcal{F}$. In the (adiabatic) limit $\varepsilon\to 0$, the unperturbed Hamiltonian system $(M,P,F)$ is $G$-invariant and describes the fast dynamics along the leaves of $\mathcal{F}$. The key point is to move the original perturbed system to a~$G$-invariant one by using a near-identity transformation. The construction of the corresponding $G$-invariant model is related to the averaging procedure for the deformed Poisson structure~$\Pi_{\varepsilon}$. In a particular setting this problem was studied in~\cite{MYU-16}, here we present a general mechanism.

\section{Preliminaries}

Let us recall some basic facts that will be used later on, related to the averaging procedure with respect to the action of compact Lie groups, for Poisson and Dirac structures~\cite{JVYV-14}.

\subsection{Gauge transformations of Dirac manifolds}
Let $(M,D)$ be a Dirac manifold, that is, a smooth regular distribution $D\subset TM\oplus T^{\ast}M$ which is maximally isotropic with respect to the natural symmetric pairing on $TM\oplus T^{\ast}M$, and involutive with respect to the Courant bracket \cite{Cou-90,CuWe-86}. The Dirac manifold $(M,D)$ carries a (singular) presymplectic foliation $(\mathcal{S},\omega)$: Its leaves are the maximal integral manifolds of the integrable (singular) distribution $C:=p_{T}(D)\subset TM$, and the leafwise presymplectic structure $\omega$ is def\/ined by $\omega_{q}(X,Y)=-\alpha(Y)$, for $(X,Y)\in C_{q}:=C\cap T_qM$ and $(X,\alpha)\in D_{q}$, $q\in M$. Here \smash{$p_{T}\colon TM\oplus T^{\ast}M$} $\rightarrow TM$ is the natural projection.

We can modify the leafwise presymplectic structure $\omega$ by the pull back of a~closed $2$-form on the base $B\in\Omega^{2}(M)$: For each presymplectic leaf $(S,\omega_{S})$, we def\/ine the new presymplectic structure as $\omega_{S}+\iota_{S}^{\ast}B$, where $\iota_{S}\colon S\hookrightarrow M$ is the inclusion map. Then, the foliation $\mathcal{S}$ equipped with the deformed leafwise presymplectic structure gives rise to the new Dirac structure
\begin{gather*}
\tau_{B}(D)=\{(X,\alpha-\mathbf{i}_{X}B)\,|\,(X,\alpha)\in D\}.
\end{gather*}
This transformation $\tau_{B}$ preserves the presymplectic foliation of $D$, and is called the \emph{gauge transformation} associated to the closed $2$-form $B$ \cite{BurRa-03, SeWe-01}.

In particular, the foliation $(S,\omega)$ is symplectic if and only if $D$ is the graph of a Poisson bivector f\/ield $\Pi$ on $M$,
\begin{gather}
D=\operatorname*{Graph}\Pi=\big\{\big(\Pi^{\natural}\alpha,\alpha\big)\,|\,\alpha\in\Omega^{1}(M)\big\}, \label{H1}
\end{gather}
where $\Pi^{\natural}\colon T^{\ast}M\rightarrow TM$ is the induced vector bundle endomorphism given by $\alpha\mapsto\mathbf{i}_{\alpha}\Pi$. Condition~\eqref{H1} can be expressed as follows
\begin{gather*}
D\cap  ( TM\oplus\{0\} ) =\{(0,0)\}.
\end{gather*}
Notice that, in general, for a given closed $2$-form $B$, the gauge transformation $\tau_{B}$ takes $\operatorname*{Graph}\Pi$ to another Dirac structure
\begin{gather*}
\tau_{B}(\operatorname*{Graph}\Pi)=\big\{\big(\Pi^{\natural}\alpha,\alpha-\mathbf{i}_{\Pi^{\natural}\alpha}B\big)\,|\,\alpha\in\Omega^{1}(M)\big\},
\end{gather*}
which may not necessarily come from a Poisson bivector f\/ield. We have the simple, but useful, criterion (see \cite[pp.~5--6]{SeWe-01}).

\begin{Lemma}\label{lem21}
If the endomorphism
\begin{gather*}
\big( \operatorname{Id}-B^{\flat}\circ\Pi^{\natural}\big) \colon \ T^{\ast }M\rightarrow T^{\ast}M
\end{gather*}
is invertible, then the Dirac structure $\tau_{B}(\mathrm{Graph}\Pi)$ is the graph of the Poisson tensor $\tau_{B}(\Pi)$ whose induced endomorphism is
defined by
\begin{gather*}
\tau_{B}(\Pi)^{\sharp}=\Pi^{\sharp}\circ\big(\operatorname{Id}-B^{\flat}\circ \Pi^{\sharp}\big)^{-1}.
\end{gather*}
\end{Lemma}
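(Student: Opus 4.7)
The plan is direct: parametrize the transformed Dirac structure by covectors, invert the parametrization using the hypothesis, and then inherit antisymmetry and the Jacobi identity from the Dirac structure itself.

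First, I would use the criterion displayed just before the lemma: a Dirac structure $\tilde D \subset TM\oplus T^*M$ is the graph of a bivector field if and only if $\tilde D \cap (TM \oplus \{0\}) = 0$, equivalently if and only if the projection $p_{T^*}\colon \tilde D \to T^*M$ is a fiberwise linear isomorphism. The typical element of $\tau_B(\mathrm{Graph}\,\Pi)$ is $(\Pi^{\natural}\alpha,\,\alpha - \mathbf{i}_{\Pi^{\natural}\alpha}B)$, and since $\mathbf{i}_{\Pi^{\natural}\alpha}B = B^{\flat}(\Pi^{\natural}\alpha)$ with $B^{\flat}\colon TM\to T^*M$, $X\mapsto \mathbf{i}_X B$, the covector component equals $(\mathrm{Id}-B^{\flat}\circ\Pi^{\natural})(\alpha)$. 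Thus $p_{T^*}$ restricted to $\tau_B(\mathrm{Graph}\,\Pi)$ is precisely $\mathrm{Id} - B^{\flat}\circ \Pi^{\natural}$, which is invertible by hypothesis.

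Second, having invertibility I would solve $\beta = (\mathrm{Id}-B^{\flat}\circ\Pi^{\natural})\alpha$ to obtain $\alpha = (\mathrm{Id}-B^{\flat}\circ\Pi^{\natural})^{-1}\beta$, so that the tangent component becomes $\Pi^{\natural}\alpha = \Pi^{\natural}\circ(\mathrm{Id}-B^{\flat}\circ \Pi^{\natural})^{-1}\beta$. This yields a well-defined bundle map $\tau_B(\Pi)^{\sharp}\colon T^*M\to TM$ with the stated formula, whose graph equals $\tau_B(\mathrm{Graph}\,\Pi)$ by construction.

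Third, I would verify that $\tau_B(\Pi)^{\sharp}$ comes from a genuine (skew) bivector field. This is immediate from the maximal isotropy of $\tau_B(\mathrm{Graph}\,\Pi)$ with respect to the canonical symmetric pairing: for any two elements $(\tau_B(\Pi)^{\sharp}\beta,\beta)$ and $(\tau_B(\Pi)^{\sharp}\gamma,\gamma)$ of the graph, the isotropy condition $\beta(\tau_B(\Pi)^{\sharp}\gamma) + \gamma(\tau_B(\Pi)^{\sharp}\beta)=0$ is exactly skew-symmetry. Finally, Jacobi follows from Courant-integrability of the gauge-transformed Dirac structure (gauge transformations by closed $2$-forms preserve integrability \cite{SeWe-01}), combined with the classical fact that the graph of a bivector is an integrable Dirac structure if and only if the bivector is Poisson.

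The routine part is the linear-algebraic inversion; the only place demanding care is a bookkeeping one, namely to verify that $B^{\flat}\circ\Pi^{\natural}$ (rather than its transpose) is the correct endomorphism of $T^*M$ identified with $p_{T^*}$ after the substitution, so that the invertibility hypothesis is applied to the right operator. Once that identification is pinned down, the rest of the argument is formal.
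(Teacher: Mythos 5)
Your proof is correct; note, however, that the paper itself does not prove this lemma at all---it is quoted as a known criterion with a pointer to \cite[pp.~5--6]{SeWe-01}, and your argument (parametrize $\tau_{B}(\operatorname{Graph}\Pi)$ by $\alpha$, identify the covector component with $\big(\operatorname{Id}-B^{\flat}\circ\Pi^{\natural}\big)\alpha$, invert, then get skew-symmetry from maximal isotropy and the Jacobi identity from Courant involutivity of the gauge-transformed structure) is exactly the standard one carried out in that reference. The bookkeeping point you flag at the end is handled correctly: the covector component of the transformed pair is $\alpha-B^{\flat}\big(\Pi^{\natural}\alpha\big)$, so $\operatorname{Id}-B^{\flat}\circ\Pi^{\natural}$ on $T^{\ast}M$ is indeed the operator to which the invertibility hypothesis must be applied.
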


 Here, we denote by $B^{\flat}\colon TM\to T^*M$ the vector bundle endomorphism given by $X\mapsto \mathbf{i}_XB$.

\subsection[$G$-averaging procedure]{$\boldsymbol{G}$-averaging procedure}
Let $G$ be a connected, compact Lie group, and $\mathfrak{g}$ its Lie algebra. Suppose we are given a smooth (left) action $\Phi\colon G\times M\rightarrow M$ on a manifold $M$. For every $a\in\mathfrak{g}$, the corresponding inf\/initesimal generator is denoted by $a_{M}\in\mathfrak{X}(M)$,
\begin{gather*}
a_{M}(q):=\left.\frac{\mathrm{d}}{\mathrm{d}t}\right|_{t=0}\Phi_{\exp(ta)}(q),\qquad q\in M.
\end{gather*}
For every tensor f\/ield $T$ on $M$, we denote by $ \langle T \rangle^{G}$ its $G$-average, which is def\/ined as
\begin{gather*}
 \langle T \rangle^{G}:=\int_G \Phi^{\ast}_{g}T\,\mathrm{d}g,
\end{gather*}
where $\mathrm{d}g$ is the normalized Haar measure on $G$. This averaging procedure can be also applied to Dirac structures \cite{JVYV-14}: Let $D\subset TM\oplus T^{\ast}M$ be a Dirac structure on $M$, and let $(\mathcal{S},\omega)$ be its associated presymplectic foliation, carrying the leafwise presymplectic form~$\omega$.

\begin{Definition}\label{def2.1}
The $G$-action on $M$ is \emph{compatible} with the Dirac structure $D$ if each leaf $S$, of~$\mathcal{S}$, is invariant under the action of $G$, and there exists a~$\mathbb{R}$-linear mapping $\rho\in\mathrm{Hom}(\mathfrak{g},\Omega^{1}(M))$ such that
\begin{gather}
\mathbf{i}_{a_{M}}\omega_S=-\iota^\ast_S\rho_{a}, \label{LW1}
\end{gather}
for every $a\in\mathfrak{g}$ (here, $\omega_S$ is a presymplectic structure on $S$, and $\iota_S\colon S\hookrightarrow M$ is the canonical injection).
\end{Definition}

Equivalently, condition \eqref{LW1} can be rewritten as follows
\begin{gather*}
(a_{M},\rho_{a})\in\Gamma(D)\mbox{ for all }a\in\mathfrak{g}. 
\end{gather*}
Then, as a consequence of the fact that $G$ is compact connected (hence the exponential map is surjective) we have the following fact~\cite{JVYV-14}.

\begin{Lemma}\label{lema22}
If the $G$-action is compatible with $D$, then there exists a Dirac structure~$\overline{D}$ on~$M$ with the following properties:
\begin{enumerate}[$(a)$]\itemsep=0pt
\item The leafwise presymplectic form of $\overline{D}$ is defined as
\begin{gather*}
\langle \omega \rangle^{G}_S:=\omega_S -\iota^*_S \mathrm{d}\Theta,
\end{gather*}
where $\Theta\in\Omega^{1}(M)$ is the $1$-form determined in terms of the $G$-action and $\rho$ by
\begin{gather}
\Theta:=\int_G \left(
\int^1_0 \Phi_{\exp(\tau a)}^{\ast}\rho_{a}\,\mathrm{d}\tau\right) \mathrm{d}g,\qquad g=\exp a.\label{MF}
\end{gather}
\item $\overline{D}$ is $G$-invariant:
\begin{gather*}
(X,\alpha)\in\Gamma(D)\mbox{ implies that }(\Phi_{g}^{\ast}X,\Phi_{g}^{\ast} \alpha)\in\Gamma(D)\mbox{ for all }g\in G.
\end{gather*}
\item The Dirac structure $\overline{D}$ is is related to $D$ by an exact gauge transformation:
\begin{gather*}
\overline{D}=\{(X,\alpha+\mathbf{i}_{X}\mathrm{d}\Theta)\,|\, (X,\alpha)\in D\}.
\end{gather*}
\end{enumerate}
\end{Lemma}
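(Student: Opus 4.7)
The plan is to \emph{define} $\overline{D}$ by the exact gauge transformation in part~(c), then to derive (a) from general properties of gauge transformations, and to invest the main effort in proving the $G$-invariance in~(b). Since $\mathrm{d}\Theta$ is a closed $2$-form, the formula $\overline{D}=\{(X,\alpha+\mathbf{i}_{X}\mathrm{d}\Theta)\,|\,(X,\alpha)\in D\}$ automatically defines a Dirac structure by the general theory of gauge transformations recalled before Lemma~\ref{lem21}; in the sign convention of Section~2.1 this is $\tau_{B}(D)$ with $B=-\mathrm{d}\Theta$. Item~(a) is then immediate: a gauge transformation by $B$ modifies each leafwise presymplectic form by $\iota_{S}^{\ast}B$, giving $\omega_{S}-\iota_{S}^{\ast}\mathrm{d}\Theta$, as stated.

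The heart of the proof is~(b). My plan is an infinitesimal argument built on the Courant involutivity of $D$. For $a\in\mathfrak{g}$, set $\phi^{a}_{t}:=\Phi_{\exp(ta)}$ and $D_{t}:=(\phi^{a}_{t})^{\ast}D$. Since $(a_{M},\rho_{a})\in\Gamma(D)$, for any $(Y,\beta)\in\Gamma(D)$ the Courant bracket produces
\begin{gather*}
\bigl([a_{M},Y],\,\mathcal{L}_{a_{M}}\beta-\mathbf{i}_{Y}\mathrm{d}\rho_{a}\bigr)\in\Gamma(D).
\end{gather*}
Comparing with $\mathcal{L}_{a_{M}}(Y,\beta)=([a_{M},Y],\mathcal{L}_{a_{M}}\beta)$ shows that the infinitesimal variation of $D_t$ inside $TM\oplus T^{\ast}M$ is realised by a gauge transformation with derivative $\dot B^{a}_{0}=-\mathrm{d}\rho_{a}$. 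Combining this with the one-parameter group law yields the key identity
\begin{gather*}
D_{t}=\tau_{B^{a}_{t}}(D),\qquad B^{a}_{t}=-\mathrm{d}\!\int_{0}^{t}(\phi^{a}_{s})^{\ast}\rho_{a}\,\mathrm{d}s,
\end{gather*}
which at $t=1$ compares $\Phi_{\exp(a)}^{\ast}D$ with $D$ using precisely the inner integrand of~\eqref{MF}.

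The final step is to average this identity over $G$ against the normalised Haar measure $\mathrm{d}g$. By construction, the averaged primitive is exactly the $1$-form $\Theta$ from~\eqref{MF}, and left-invariance of $\mathrm{d}g$ together with surjectivity of $\exp$ for a compact connected $G$ (so it suffices to check $\Phi_{\exp(a)}^{\ast}\overline{D}=\overline{D}$ on one-parameter subgroups) upgrade the infinitesimal statement to $\Phi_{g}^{\ast}\overline{D}=\overline{D}$ for all $g\in G$. The obstacle I expect to be most delicate is the rigorous justification of this averaging: one has to interchange integration over $G$ with pull-back by individual group elements (a Fubini-type argument), handle the non-injectivity of $\exp$ implicit in the parametrisation $g=\exp a$, and keep careful track of signs in the transition between the convention $\tau_{B}(D)=\{(X,\alpha-\mathbf{i}_{X}B)\,|\,(X,\alpha)\in D\}$ of Section~2.1 and the ``$+\mathbf{i}_{X}\mathrm{d}\Theta$'' appearing in~(c).
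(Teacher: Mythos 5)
Your plan is correct and follows essentially the same route as the source: the paper itself states Lemma~\ref{lema22} without proof, importing it from~\cite{JVYV-14}, where $\overline{D}$ is likewise \emph{defined} by the exact gauge transformation of part~(c), item~(a) is read off from the general effect of $\tau_{B}$ on leafwise presymplectic forms, and item~(b) is obtained by deriving $\Phi_{\exp(ta)}^{\ast}D=\tau_{B^{a}_{t}}(D)$ from the Courant-bracket relation $\bigl([a_{M},Y],\mathcal{L}_{a_{M}}\beta-\mathbf{i}_{Y}\mathrm{d}\rho_{a}\bigr)\in\Gamma(D)$ and then averaging over $G$ using surjectivity of $\exp$ and bi-invariance of the Haar measure. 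The one caveat is that the passage from the infinitesimal statement to the finite-time identity (your ``key identity''), together with the cocycle relation $\mathrm{d}\theta_{hg}=\Phi_{g}^{\ast}\mathrm{d}\theta_{h}+\mathrm{d}\theta_{g}$ needed to make the averaging argument close, is the technical core of that proof and is asserted rather than carried out in your sketch.
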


The Dirac structure $\overline{D}$ will be called the $G$-\emph{average} of $D$ relative to the compatible $G$-action. As we have mentioned above, for the case in which the Dirac structure is the graph of a Poisson tensor $\Pi$ on~$M$, $D=\operatorname*{Graph}\Pi$, its $G$-average $\overline{D}=\tau_{B}(D)$, where $B=-\mathrm{d}\Theta$, does not necessarily come from a Poisson structure. In other words: In general, the averaging of Poisson structures via compatible $G$-actions only leads to $G$-invariant Dirac structures; but, by Lemma~\ref{lem21}, in the particular case of an invertible endomorphism
\begin{gather*}
\operatorname{Id}+(\mathrm{d}\Theta)^{\flat}\circ\Pi^{\natural},
\end{gather*}
we can say something more, namely, that the $G$-average $\overline{D}$ is the graph of the $G$-invariant Poisson tensor $\overline{\Pi}=\tau_{B}(\Pi)$.

\subsection[$G$-invariant connections]{$\boldsymbol{G}$-invariant connections}
Recall that a \emph{$($generalized Ehresmann$)$ connection} on a manifold~$M$~\cite{KMS-93}, is a vector-valued $1$-form $\gamma\in\Omega^{1}(M;TM)$ satisfying the following conditions: $\gamma^{2}=\gamma$, and the rank of the distribution $\operatorname*{Im}\gamma\subset TM$ is constant on~$M$ (this last condition is a consequence of the f\/irst for $M$ connected). Assume that the action $\Phi\colon G\times M\rightarrow M$, of the compact, connected Lie group $G$ on the connected manifold $M$, preserves the image of~$\gamma$, $\mathrm{d}_{q}\Phi_{g}$ $\circ\gamma_{q}=\gamma_{\Phi_{g}(q)}$, for all $q\in M$, $g\in G$. Then, the $G$-average of $\gamma$ is the vector-valued $1$-form
$\langle \gamma \rangle^{G}\in$ $\Omega^{1}(M,TM)$ def\/ined by the formula
\begin{gather*}
\langle \gamma \rangle^{G}(X):= \int_{G}\Phi_{g}^{\ast}(\gamma((\Phi_{g})_{\ast}X)\,\mathrm{d}g,
\end{gather*}
where $X\in\mathfrak{X}(M)$ is any vector f\/ield, and this is a $G$-invariant connection on~$M$.

\section[$\mathcal{F}$-almost-coupling structures]{$\boldsymbol{\mathcal{F}}$-almost-coupling structures}

In this section, we present some basic def\/initions and facts concerning the coupling procedure on foliated Poisson and Dirac manifolds (for more details, see~\cite{Va-04,Va-06, Vo-01}). The term `coupling' comes from Sternberg's coupling method on symplectic f\/iber bundles (see, for
example,~\cite{GLS-96}).

Suppose we are given a manifold with a (regular) foliation $(M,\mathcal{F})$. We will denote by $\mathbb{V}:=T\mathcal{F}$ the tangent bundle of $\mathcal{F}$, and by $\mathbb{V}^{0}=\operatorname*{Ann}(T\mathcal{F})\subset T^{\ast}M$ its annihilator. By a normal bundle of the foliation $\mathcal{F}$ we mean a~sub-bundle $\mathbb{H}\subset TM$ which is complementary to the tangent bundle~$\mathbb{V}$,
\begin{gather}
TM=\mathbb{H}\oplus\mathbb{V}.\label{AC1}
\end{gather}
To a normal bundle $\mathbb{H}$, there is associated a vector-valued $1$-form $\gamma\in\Omega^{1}(M;TM)$, def\/ined as the canonical projection along $\mathbb{H}$, $\gamma := \operatorname{pr}_{\mathbb{H}}\colon TM\rightarrow\mathbb{V}$. This form satisf\/ies $\gamma^{2}=\gamma$ and $\operatorname{Im}\gamma=\mathbb{V}$, and hence def\/ines an Ehresmann connection on the foliated manifold $(M,\mathcal{F)}$ (see~\cite{KMS-93}). The curvature of the connection $\gamma$ is the vector-valued $2$-form $R^{\gamma}\in\Omega^{2}(M;\mathbb{V})$ given by $R^{\gamma}=\frac{1}{2}[\gamma,\gamma]_{\operatorname*{FN}}$, where $[\cdot ,\cdot ]_{\operatorname*{FN}}$ denotes the Fr\"{o}licher--Nijenhuis bracket for vector valued forms on~$M$~\cite{KMS-93}. The curvature controls the integrability of the normal sub-bundle, in the sense that $\mathbb{H}$ is integrable if and only if~$\gamma$ is f\/lat, $R^{\gamma}=0$.

According to \eqref{AC1}, we have the dual splitting
\begin{gather}
T^{\ast}M=\mathbb{V}^{0}\oplus\mathbb{H}^{0}. \label{Spl2}
\end{gather}
Then, \eqref{AC1} and \eqref{Spl2} induce an $\mathbb{H}$-dependent bigrading of multivector f\/ields and dif\/ferential forms on $M$. For any $A\in
\Gamma(\wedge^{k}TM)$ and $\alpha\in\Omega^{k}(M)$, we have $A=\sum\limits _{s+l=k}A_{s,l}$ and $\alpha=\sum\limits_{s+l=k}\alpha_{s,l}$, where the elements~$A_{s,l}$ and~$\alpha_{s,l}$, belonging to the subspaces $\Gamma(\wedge^{s}\mathbb{H})\otimes\Gamma(\wedge^{l}\mathbb{V})$ and $\Gamma(\wedge ^{s}\mathbb{V}^{0})\otimes\Gamma(\wedge^{l}\mathbb{H}^{0})$, respectively, are said to be multivector f\/ields and forms of bidegree $(s,l)$. Moreover, the exterior dif\/ferential $\mathrm{d}$ on $M$ inherits the $\mathbb{H}$-bigrading decomposition (see \cite{Va-04}) in the form
$\mathrm{d}=\mathrm{d}_{1,0}+\mathrm{d}_{2,-1}+\mathrm{d}_{0,1}$.

We will need the following def\/inition \cite{Va-04, Vo-01}.

\begin{Definition}
A \emph{Poisson structure} $\Pi$ on the foliated manifold $(M,\mathcal{F})$ is said to be $\mathcal{F}$-\emph{almost-coupling}, via a~normal bundle $\mathbb{H}$ of the foliation $\mathcal{F}$, if the image of~$\mathbb{V}^{0}$ under the vector bundle morphism $\Pi^{\sharp}\colon T^{\ast}M\rightarrow TM$, is contained in~$\mathbb{H}$:
\begin{gather}\label{AC3}
\Pi^{\sharp}\big(\mathbb{V}^{0}\big)\subseteq\mathbb{H}.
\end{gather}
\end{Definition}

This condition means that, in the bigraded decomposition of $\Pi$ associated to \eqref{AC1}, the mixed term $\Pi_{1,1}$ is zero and $\Pi=\Pi_{2,0}+\Pi_{0,2}$, where $\Pi_{2,0}\in\Gamma(\wedge^{2}\mathbb{H})$ and $\Pi_{0,2}\in\Gamma(\wedge^{2}\mathbb{V})$ is a Poisson tensor. The characteristic distribution of $\Pi$ is contained into the (possibly non-integrable) distribution $\mathbb{H}\oplus\Pi_{0,2}^{\sharp}(\mathbb{H}^{0})$.
\begin{Remark}
Notice that conditions \eqref{AC1}, \eqref{AC3} hold whenever the following transversality and regularity conditions are satisf\/ied (in this case we say that the bivector f\/ield $\Pi$ is \emph{compatible} with the foliation $\mathcal{F}$):
\begin{gather}\label{C1}
\Pi^{\sharp}\big(\mathbb{V}^{0}\big)\cap\mathbb{V}=\{0\} ,
\end{gather}
and
\begin{gather}\label{C2}
\operatorname{rank}\Pi^{\sharp}\big(\mathbb{V}^{0}\big)=\mbox{constant on }M.
\end{gather}
Then, it follows from \eqref{C1}, \eqref{C2} that a normal bundle of $\mathcal{F}$ in \eqref{AC3} can be constructed as follows: $\mathbb{H}=\mathbb{H}'\oplus\Pi^{\sharp}(\mathbb{V}^{0})$, where $\mathbb{H}'\subset TM$ is an arbitrary sub-bundle, complementary to the regular distribution $\Pi^{\sharp}\big(\mathbb{V}^{0}\big)\oplus\mathbb{V}$.
\end{Remark}
The $\mathcal{F}$-coupling situation occurs when, along with \eqref{C1}, we have
\begin{gather*}
\operatorname{rank}\Pi^{\sharp}\big(\mathbb{V}^{0}\big)=\operatorname{codim}\mathcal{F}\mbox{ on }M,
\end{gather*}
and hence the normal bundle $\mathbb{H}$ of $\mathcal{F}$ in \eqref{AC3}, associated to $\Pi$, is unique and given by
\begin{gather}\label{CC2}
\mathbb{H}=\Pi^{\sharp}\big(\mathbb{V}^{0}\big).
\end{gather}
In this case, $\Pi$ is said to be an $\mathcal{F}$-\emph{coupling Poisson structure}.
The factorization of the Jacobi identity for $\Pi$ (see~\cite[equations (6.4)--(6.6)]{JVYV-14}) implies that the intrinsic connection $\gamma$, associated with the normal bundle \eqref{CC2}, possesses the following properties \cite{Va-04, Vo-01}: The connection~$\gamma$ is \emph{Poisson} on the Poisson bundle $(M,\mathcal{F},\Pi_{0,2})$, that is, $\mathcal{L}_{X}\Pi_{0,2}=0$ for any projectable section $X\in\Gamma_{\operatorname*{pr}}(\mathbb{H})$ (recall that the projectability property for $X$ on the foliated manifold $(M,\mathcal{F)}$ is expressed as $[X,\Gamma(\mathbb{V})]\subset\Gamma(\mathbb{V})$). Moreover, the curvature of $\gamma$ takes values in the space of Hamiltonian vector f\/ields of $\Pi_{0,2}$:
\begin{gather}\label{Cur}
R^{\gamma}(X,Y)=-\Pi_{0,2}^{\sharp}\mathrm{d}\sigma(X,Y)\quad \mbox{for all } X,Y\in\Gamma_{\operatorname*{pr}}(\mathbb{H}).
\end{gather}
Here, the $2$-form $\sigma\in\Gamma(\wedge^{2}\mathbb{V}^{0})$, called the \emph{coupling form}, is uniquely determined by $\Pi$, and satisf\/ies the $\gamma$-covariant constancy condition $\mathrm{d}_{1,0}\sigma=0$. Indeed, we can write the following explicit expression in terms of the horizontal part of $\Pi$ and the horizontal lifting induced by~$\gamma$~\cite{Vo-01}:
\begin{gather*}
\sigma (u_1,u_2)=-\big\langle \big(\Pi^\sharp_{2,0}\big|_{\mathbb{V}^0}\big)^{-1} \mathrm{hor}(u_1),\mathrm{hor}(u_2)\big\rangle,
\end{gather*}
for any $u_1,u_2\in\mathcal{X}(M)$ vector f\/ields on $M$.

The notion of $\mathcal{F}$-almost-coupling structures can be naturally generalized to the Dirac set\-ting~\cite{Va-06}. Given a Dirac structure $D\subset TM\oplus T^{\ast}M$ on the foliated manifold $(M,\mathcal{F})$, we def\/ine the tangent distribution
\begin{gather*}
H(D,\mathcal{F}):=\big\{X\in TM\,|\,\exists\,\alpha\in\mathbb{V}^{0}\mbox{ such that }(X,\alpha)\in D\big\}.
\end{gather*}
On the other hand, f\/ixing a normal bundle $\mathbb{H}$ of $\mathcal{F}$, we
consider the distributions
\begin{gather*}
D_{H}:=D\cap\big(\mathbb{H}\oplus\mathbb{V}^{0}\big)\qquad\mbox{and}\qquad D_{V}:=D\cap\big(\mathbb{V}\oplus\mathbb{H}^{0}\big).
\end{gather*}
In the general case, these are singular distributions, but the almost-coupling hypothesis implies that their respective ranks are constant, $\operatorname{rank}D_{H}=\operatorname{rank} \mathbb{H}$ and $\operatorname{rank}D_{V}=\operatorname{rank} \mathbb{V}$.

\begin{Definition}
$D$ is said to be an $\mathcal{F}$-\emph{almost-coupling Dirac structure}, via a normal bundle~$\mathbb{H}$ of~$\mathcal{F}$, if \begin{gather}\label{CP1}
H(D,\mathcal{F})\subseteq\mathbb{H}.
\end{gather}
\end{Definition}

One can show that condition \eqref{CP1} is equivalent to the following one:
\begin{gather*}
D=D_{H}\oplus D_{V}.
\end{gather*}
Also, one says that $D$ is an $\mathcal{F}$-\emph{coupling Dirac structure} \cite{Va-06,Wa} if $H(D,\mathcal{F})$ is a normal bundle of~$\mathcal{F}$,
\begin{gather*}
TM=H(D,\mathcal{F})\oplus\mathbb{V}.
\end{gather*}
Notice that \eqref{CP1} implies the following property:
\begin{gather*}
D_{V}\cap(\mathbb{V}\oplus\{0\})=\{0\}.
\end{gather*}
It follows from here that a given an $\mathcal{F}$-almost coupling Dirac structure~$D$ via~$\mathbb{H}$ induces a leaf-tangent Poisson bivector f\/ield $P\in\Gamma(\wedge^{2}\mathbb{V})$ such that
\begin{gather}
D_{V}=\big\{(P^{\sharp}\eta,\eta)\,|\,\eta\in\mathbb{H}^{0}\big\}.\label{NEW}
\end{gather}
\begin{Remark}
It easy to see that these def\/initions agree with the corresponding notions in the Poisson case. Indeed, if $D=\operatorname*{Graph}\Pi$, for a certain Poisson tensor~$\Pi$, then $H(D,\mathcal{F})=\Pi^{\sharp}(\mathbb{V}^{0})$.
\end{Remark}

The relation between geometric data and $\mathcal{F}$-coupling Dirac structures \cite{Va-06,JVYV-14,Wa}, carries over to the almost-coupling setting, with some particularities: given a connection~$\gamma$, the almost-coupling Dirac structure directly induces a leaf-tangent Poisson structure $P$, and no coupling form $\sigma$ (in the terminology of~\cite{JVYV-14}) appears. We can use that correspondence to characterize (in terms of $P$) a particular class of gauge transformations which preserve\footnote{In general, arbitrary gauge transformations do not carry almost-coupling Dirac
structures into almost-coupling structures again. This does happen for the gauge transformations~\eqref{EG1}, however.} the class of $\mathcal{F}$-almost coupling Dirac structures on a given foliated manifold $(M,\mathcal{F)}$ (this class naturally appears in the context of the
averaging method). To this end, let us pick an arbitrary $1$-form $Q\in\Gamma(\mathbb{V}^{0})$ and consider the exact gauge transformation
\begin{gather}\label{EG1}
D\mapsto\overline{D}:=\tau_{B}(D),\qquad B=-\mathrm{d}Q.
\end{gather}
Then, relative to decomposition \eqref{Spl2}, we have $B=B_{2,0}+B_{1,1}$, where $B_{2,0}=\mathrm{d}_{1,0}Q$ and $B_{1,1}=\mathrm{d}_{0,1}Q$.

\begin{Lemma}\label{lema31}
Let $D$ be an $\mathcal{F}$-almost coupling Dirac structure via a normal bundle~$\mathbb{H}$ of~$\mathcal{F}$. Then, the exact gauge transformation~\eqref{EG1} maps $D$ into the Dirac structure~$\overline{D}$ which is again $\mathcal{F}$-almost coupling, this time via the normal bundle
\begin{gather*}
\mathbb{\overline{H}}:=\big(\operatorname{Id}+P^{\sharp}\circ B_{1,1}^{\flat}\big)(\mathbb{H}).
\end{gather*}
Here, $P\in\Gamma(\bigwedge^{2}\mathbb{V})$ is the Poisson bivector field in~\eqref{NEW}.
\end{Lemma}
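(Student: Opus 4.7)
The plan is to verify the two defining pieces of the $\mathcal{F}$-almost-coupling property for $\overline{D}$ via $\overline{\mathbb{H}}$: first, that $\overline{\mathbb{H}}\oplus\mathbb{V}=TM$; second, that $H(\overline{D},\mathcal{F})\subseteq\overline{\mathbb{H}}$. The essential input throughout is the bigraded decomposition of $B=-\mathrm{d}Q$: since $Q\in\Gamma(\mathbb{V}^{0})$ is of bidegree $(1,0)$, the component $B_{0,2}$ automatically vanishes, so $B=B_{2,0}+B_{1,1}$ as stated in the excerpt.

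For the first claim, I would observe that $B_{1,1}^{\flat}(\mathbb{H})\subseteq\mathbb{H}^{0}$: contracting a horizontal vector into a form of bidegree $(1,1)$ kills the $\mathbb{V}^{0}$-slot and leaves a $1$-form in $\mathbb{H}^{0}$. Combined with $P^{\sharp}(T^{\ast}M)\subseteq\mathbb{V}$, which holds because $P\in\Gamma(\wedge^{2}\mathbb{V})$, one gets $P^{\sharp}\circ B_{1,1}^{\flat}(\mathbb{H})\subseteq\mathbb{V}$. Thus $(\operatorname{Id}+P^{\sharp}\circ B_{1,1}^{\flat})|_{\mathbb{H}}$ is of the form ``identity plus vertical correction'', so it is injective, and its image $\overline{\mathbb{H}}$ is transverse to $\mathbb{V}$; a rank count then yields $TM=\overline{\mathbb{H}}\oplus\mathbb{V}$.

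For the second claim, I would take $(X,\alpha)\in\overline{D}$ with $\alpha\in\mathbb{V}^{0}$, so by the definition of the gauge transformation $(X,\alpha+\mathbf{i}_{X}B)\in D$. The almost-coupling splitting $D=D_{H}\oplus D_{V}$, together with the description~\eqref{NEW} of $D_{V}$, lets me write this pair as $(X_{H},\xi)+(P^{\sharp}\eta,\eta)$ with $X_{H}\in\mathbb{H}$, $\xi\in\mathbb{V}^{0}$, $\eta\in\mathbb{H}^{0}$, $(X_{H},\xi)\in D_{H}$, so that $X=X_{H}+P^{\sharp}\eta$ and $\alpha+\mathbf{i}_{X}B=\xi+\eta$. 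Decomposing $\mathbf{i}_{X}B$ with respect to $T^{\ast}M=\mathbb{V}^{0}\oplus\mathbb{H}^{0}$, the contributions $\mathbf{i}_{X}B_{2,0}$ and $\mathbf{i}_{X_{V}}B_{1,1}$ land in $\mathbb{V}^{0}$, while only $\mathbf{i}_{X_{H}}B_{1,1}$ survives in $\mathbb{H}^{0}$. Matching $\mathbb{H}^{0}$-components of the equation $\alpha+\mathbf{i}_{X}B=\xi+\eta$ (using $\alpha\in\mathbb{V}^{0}$) therefore forces $\eta=B_{1,1}^{\flat}(X_{H})$, whence $X=(\operatorname{Id}+P^{\sharp}\circ B_{1,1}^{\flat})(X_{H})\in\overline{\mathbb{H}}$, establishing $H(\overline{D},\mathcal{F})\subseteq\overline{\mathbb{H}}$.

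The whole argument is essentially bookkeeping with the $\mathbb{H}$-induced bigrading on forms; the main thing to execute carefully is the decomposition of $\mathbf{i}_{X}B$ into $\mathbb{V}^{0}$- and $\mathbb{H}^{0}$-parts. The one conceptual point is that the absence of a $(0,2)$-piece in $B$, guaranteed by $Q\in\Gamma(\mathbb{V}^{0})$, is exactly what determines $\eta$ in terms of $X_{H}$ alone and thereby produces the closed formula for $\overline{\mathbb{H}}$ given in the statement.
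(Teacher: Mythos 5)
Your argument is correct and follows essentially the same route as the paper's proof: both rest on the bigraded decomposition of $\mathbf{i}_{X}B$ with $B_{0,2}=0$ (since $Q\in\Gamma(\mathbb{V}^{0})$) together with the description~\eqref{NEW} of $D_{V}$, which forces $\eta=B_{1,1}^{\flat}(X_{H})$ and hence $X=(\operatorname{Id}+P^{\sharp}\circ B_{1,1}^{\flat})(X_{H})$. The only difference is cosmetic --- you pull elements of $\overline{D}$ back to $D$ rather than pushing elements of $D$ forward, and you spell out the transversality $TM=\overline{\mathbb{H}}\oplus\mathbb{V}$, which the paper leaves implicit.
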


\begin{proof}
For an arbitrary closed $2$-form $B=B_{2,0}+B_{1,1}+B_{0,2}$, the gauge transformation $\tau_{B}(D)$ is a Dirac structure consisting of elements
of the form
\begin{gather*}
(X_{1,0}+X_{0,1})\oplus\big( \big(\alpha_{1,0}-B_{2,0}^{\flat}X_{1,0} -B_{1,1}^{\flat}X_{0,1}\big) +\big(\alpha_{0,1}-B_{1,1}^{\flat}X_{1,0}-B_{0,2}^{\flat}X_{0,1}\big)\big) ,
\end{gather*}
where $(X_{1,0}+X_{0,1},\alpha_{1,0}+$ $\alpha_{0,1})\in D$. If $B_{0,2}=0$, then taking into account~\eqref{NEW}, we can describe the characteristic distribution of $\tau_{B}(D)$ as
\begin{gather*}
H(\tau_{B}(D),\mathcal{F})=\big\{\big(\operatorname{Id}+P^{\sharp}\circ B_{1,1}^{\flat}\big)X\,|\, X\in H(D,\mathcal{F})\big\}.
\end{gather*}
This proves the statement.
\end{proof}

Notice also that the exact gauge transformation \eqref{EG1} leaves invariant the set of all $\mathcal{F}$-coupling Dirac structures on~$(M,\mathcal{F)}$~\cite{JVYV-14}.

\section{The averaging theorems for deformations}

Let $(M,\mathcal{F},P)$ be a Poisson foliation, consisting of a \emph{regular} foliation $\mathcal{F}$ on $M$, and a leaf-tangent Poisson bivector f\/ield $P\in$ $\Gamma(\wedge^{2}\mathbb{V})$. Recall that we denote by $\mathbb{V}=T\mathcal{F}\subset TM$ the tangent bundle of $\mathcal{F}$ and by $\mathbb{V}^{0}\subset T^{\ast}M$ its annihilator. The Poisson tensor $P$ is characterized by the property that the symplectic leaf of $P$ through each point $q\in M$, is contained into the leaf $\mathcal{F}_{q}$ of the regular foliation. Another characterization, given in \cite{Va-06}, states that the leaves of~$\mathcal{F}$ are Poisson submanifolds.

Consider a smooth (left) action on $M$, $\Phi\colon G\times M\rightarrow M$, of a connected and compact Lie group~$G$, which is \emph{compatible} with the Poisson structure $P\in$ $\Gamma(\wedge ^{2}\mathbb{V})$, in the sense of Def\/inition~\ref{def2.1}: For every~$a\in\mathfrak{g}$, the inf\/initesimal generator $a_{M}$ has the form
\begin{gather}\label{AT}
a_{M}=P^{\sharp} \mu_{a}\quad \mbox{for all }a\in\mathfrak{g},
\end{gather}
for a certain $\mathbb{R}$-linear mapping $\mu\in\operatorname*{Hom}(\mathfrak{g},\Omega^{1}(M))$. In particular, this property means that the
$G$-action preserves the leaves of the symplectic foliation of $P$, and condition~\eqref{LW1} holds, that is, the $G$-action is compatible with the
associated Dirac structure $D=\operatorname*{Graph}P$.

A $G$-action on the Poisson foliation is said to be \emph{locally Hamiltonian}, if each inf\/initesimal generator $a_{M}$ is a locally Hamiltonian
vector f\/ield relative to $P$, in other words, one can choose the $1$-form $\mu_{a}$ in~\eqref{AT} to be closed on~$M$,
\begin{gather}\label{LH}%
\mu_{a}\ \in\Omega_{\operatorname*{cl}}^{1}(M)\quad \mbox{for all }a\in\mathfrak{g}.
\end{gather}

Suppose now that we start with a \emph{smooth deformation} $\{\Pi_\varepsilon\}_{\varepsilon\in [0,1]}$ of the \emph{leaf-tangent Poisson structure} $P$, so each $\Pi_{\varepsilon}$ is a Poisson tensor on $M$, the whole family is smoothly dependent in $\varepsilon$, and is such that $\Pi_{0}=P$. Moreover, we assume that, for each $\varepsilon\in [0,1]$, the bivector f\/ield $\Pi_{\varepsilon}$ is an $\mathcal{F}$-almost-coupling Poisson structure via an $\varepsilon$-independent normal bundle~$\mathbb{H}$ of~$\mathcal{F}$:
\begin{gather}\label{O1}
\Pi_{\varepsilon}^{\sharp}\big(\mathbb{V}^{0}\big)\subseteq\mathbb{H}\quad \mbox{for all }\varepsilon\in [0,1].
\end{gather}
Such a family $\{\Pi_{\varepsilon}\}$, will be called an $\mathcal{F}$-\emph{almost-coupling Poisson deformation} of $P$ via $\mathbb{H}$. It
follows from~\eqref{O1} that each $\Pi_{\varepsilon}$ admits the following bigraded decomposition relative to~\eqref{AC1}: $\Pi_{\varepsilon}=(\Pi_{\varepsilon})_{2,0}+(\Pi_{\varepsilon})_{0,2}$, where $(\Pi_0)_{0,2}=P$. We will also assume that the leaf-tangent component of $\Pi_{\varepsilon}$ of bidegree $(0,2)$ is independent of $\varepsilon$ and hence
\begin{gather*}
(\Pi_{\varepsilon})_{0,2}=P\quad \mbox{for all }\varepsilon\in [0,1].
\end{gather*}
Then, $(\Pi_{\varepsilon})_{2,0}=\varepsilon\Lambda_{\varepsilon}$ for a~certain bivector f\/ield $\Lambda_{\varepsilon}$ of bidegree $(2,0)$, smoothly varying in $\varepsilon$, and therefore, the deformation of $P$ can be parameterized as
\begin{gather}\label{SD2}
\Pi_{\varepsilon}=P+\varepsilon\Lambda_{\varepsilon},\qquad \Lambda_{\varepsilon}\in\Gamma\big({\wedge}^{2}\mathbb{H}\big).
\end{gather}

For a f\/ixed $\mathbb{R}$-linear mapping
$\mu\in\operatorname*{Hom}(\mathfrak{g},\Omega^{1}(M))$ in \eqref{AT}, we can decompose $\mu=\mu_{1,0}+\mu_{0,1}$ relative to splitting \eqref{Spl2},
associated to a f\/ixed normal bundle $\mathbb{H}$ in \eqref{O1}.

Finally, let us associate to the family $\{\Pi_{\varepsilon}\}$ (as in \eqref{SD2}) a~smooth $\varepsilon$-dependent family of Dirac structures
\begin{gather}\label{D1}%
D_{\varepsilon}:=\operatorname*{Graph}\Pi_{\varepsilon}=\operatorname*{Graph} P\oplus\operatorname*{Graph}\varepsilon\Lambda_{\varepsilon},
\end{gather}
which is to be viewed as a deformation of the `limiting' Dirac structure $D_{0}=\operatorname*{Graph}\Pi_{0}=\operatorname*{Graph}P$. Then, for each
$\varepsilon$, the Dirac structure $D_{\varepsilon}$ also satisf\/ies the $\mathcal{F}$-almost-coupling condition~\eqref{CP1}. Under the above hypotheses, we are in position to state our main results regarding the existence of the $G$-average for these deformations of Dirac and Poisson structures. The f\/irst one is a consequence of Lemma~\ref{lema22}.
\begin{Theorem}
For every $\varepsilon\in\lbrack0,1]$, the $G$-average $\overline{D}_{\varepsilon}$, of the $\mathcal{F}$-almost-coupling Dirac structure~$D_{\varepsilon}$ in~\eqref{D1}, is well-defined and given by the exact gauge transformation
\begin{gather}\label{AD1}
\overline{D}_{\varepsilon}=\{(X,\alpha+\mathbf{i}_{X}\mathrm{d}\Theta)\,|\,(X,\alpha)\in D_{\varepsilon}\},
\end{gather}
where $\Theta$ is the $1$-form defined by \eqref{MF}, with $\rho=$ $\mu_{0,1}$.
\end{Theorem}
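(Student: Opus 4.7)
The plan is to reduce the statement to a single application of Lemma~\ref{lema22}, after verifying that the $G$-action is compatible with every Dirac structure $D_{\varepsilon}$ in the family, in the sense of Definition~\ref{def2.1}, and moreover that a common compatibility $1$-form $\rho$ can be chosen independent of $\varepsilon$. Once this common $\rho$ is identified with $\mu_{0,1}$, part~(c) of Lemma~\ref{lema22} yields exactly~\eqref{AD1} with the same $\Theta$ produced by formula~\eqref{MF} for every $\varepsilon$.

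To produce the compatibility datum, I decompose $\mu_{a}=(\mu_{1,0})_{a}+(\mu_{0,1})_{a}$ relative to~\eqref{Spl2}, where $(\mu_{1,0})_{a}\in\Gamma(\mathbb{V}^{0})$ and $(\mu_{0,1})_{a}\in\Gamma(\mathbb{H}^{0})$. Two elementary bidegree arguments then do the work. First, since $P\in\Gamma(\wedge^{2}\mathbb{V})$ has bidegree $(0,2)$, the interior product $\mathbf{i}_{\alpha}P$ vanishes for every $\alpha$ of bidegree $(1,0)$; hence $P^{\sharp}$ annihilates $\mathbb{V}^{0}$, so the hypothesis $a_{M}=P^{\sharp}\mu_{a}$ reduces to $a_{M}=P^{\sharp}(\mu_{0,1})_{a}$. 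Second, since $\Lambda_{\varepsilon}\in\Gamma(\wedge^{2}\mathbb{H})$ has bidegree $(2,0)$, the symmetric argument shows that $\Lambda_{\varepsilon}^{\sharp}$ annihilates $\mathbb{H}^{0}$, so $\Lambda_{\varepsilon}^{\sharp}(\mu_{0,1})_{a}=0$. Combining,
\begin{gather*}
\Pi_{\varepsilon}^{\sharp}(\mu_{0,1})_{a}=P^{\sharp}(\mu_{0,1})_{a}+\varepsilon\Lambda_{\varepsilon}^{\sharp}(\mu_{0,1})_{a}=a_{M},
\end{gather*}
which is exactly $\big(a_{M},(\mu_{0,1})_{a}\big)\in\Gamma(D_{\varepsilon})$, i.e., the equivalent form of~\eqref{LW1} for $\rho=\mu_{0,1}$.

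With compatibility established, Lemma~\ref{lema22}(c) produces the $G$-invariant Dirac structure $\overline{D}_{\varepsilon}$ as the exact gauge transform of $D_{\varepsilon}$ by $-\mathrm{d}\Theta$, where $\Theta$ is obtained from~\eqref{MF} with $\rho=\mu_{0,1}$; this is precisely~\eqref{AD1}. The only conceptual point---not really an obstacle, but the substance of the statement---is that the $\mathcal{F}$-almost-coupling hypothesis $\Lambda_{\varepsilon}\in\Gamma(\wedge^{2}\mathbb{H})$ decouples the deformation from the compatibility datum: the perturbation sits in bidegree $(2,0)$, which is annihilated by any $1$-form of bidegree $(0,1)$. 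Since $\mu_{0,1}$ is thus $\varepsilon$-independent, so is $\Theta$, and a single averaging $1$-form governs the entire family.
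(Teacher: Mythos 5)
Your proof is correct and follows essentially the same route as the paper: you verify the compatibility condition $\big(a_{M},(\mu_{0,1})_{a}\big)\in\Gamma(D_{\varepsilon})$ via the same two bidegree observations (the chain $\Pi_{\varepsilon}^{\natural}(\mu_{a})_{0,1}=(\Pi_{\varepsilon})_{0,2}^{\natural}(\mu_{a})_{0,1}=P^{\natural}\mu_{a}=a_{M}$ in the paper's proof) and then invoke Lemma~\ref{lema22}. Your closing remark that the $(2,0)$ placement of the perturbation is what makes $\rho=\mu_{0,1}$ and hence $\Theta$ independent of $\varepsilon$ is exactly the point of the statement.
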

\begin{proof}
It follows from \eqref{SD2} that, for every $\varepsilon\in [0,1]$ and $a\in\mathfrak{g}$, we have
\begin{gather*}
\Pi_{\varepsilon}^{\natural}(\mu_{a})_{0,1}=(\Pi_{\varepsilon})_{0,2}^{\natural}(\mu_{a})_{0,1}=(\Pi_{\varepsilon})_{0,2}^{\natural}\mu
_{a}=P^{\natural}\mu_{a}=a_{M},
\end{gather*}
and, hence, the $G$-action preserves the symplectic leaves of $\Pi_{\varepsilon}^{\natural}$, and the compatibility condition~\eqref{LW1} holds for
$\rho=\mu_{0,1}$. Then, by Lemma \ref{lema22}, the $G$-average of $D_{\varepsilon}$ is well-def\/ined and given by~\eqref{AD1}.
\end{proof}

\begin{Theorem}\label{teo42}
If the $G$-action is locally Hamiltonian $($condition~\eqref{LH}$)$, then, for any $G$-invariant open connected set with compact closure, $N\subset M$, and for sufficiently small $\varepsilon\in {]0,1[}$, the restriction of the Dirac structure~\eqref{AD1}, $\overline{D}_{\varepsilon}|_N$, is the graph of a~$G$-invariant Poisson structure~$\overline{\Pi}_{\varepsilon}$ on~$N$ defined by
\begin{gather}\label{AP1}
\overline{\Pi}_{\varepsilon}^{\sharp}=\Pi_{\varepsilon}^{\sharp}\circ \big(\operatorname{Id}+(\mathrm{d}Q)^{\flat}\circ\Pi_{\varepsilon}^{\sharp}\big)^{-1}
\end{gather}
with $\overline{\Pi}_{0}=P$. Here $Q\in\Gamma(\mathbb{V}^{0})$ is expressed as
\begin{gather}\label{AP2}
Q:=-\int_G \left( \int^1_0 \Phi_{\exp(\tau a)}^{\ast}(\mu_{a})_{1,0}\,\mathrm{d}\tau\right)\mathrm{d}g,\qquad g=\exp a.
\end{gather}
Moreover, the Poisson structure $\overline{\Pi}_{\varepsilon}$ is $\mathcal{F}$-almost-coupling via the following $G$-invariant normal bundle of~$\mathcal{F}$:
\begin{gather}\label{IN}
\mathbb{\overline{H}}:=\operatorname*{Span}\big\{\bar{X}=X+P^{\sharp}\mathrm{d}Q(X)\,|\, X\in\Gamma_{\operatorname*{pr}}(\mathbb{H})\big\}.
\end{gather}
\end{Theorem}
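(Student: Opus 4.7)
The strategy is to combine the preceding averaging theorem with Lemmas~\ref{lem21} and~\ref{lema31}, using the locally Hamiltonian hypothesis~\eqref{LH} to simplify the averaging gauge. The preceding theorem delivers $\overline{D}_\varepsilon=\tau_{-\mathrm{d}\Theta}(D_\varepsilon)$ with $\Theta$ given by~\eqref{MF} for $\rho=\mu_{0,1}$. My first step is to prove that, under~\eqref{LH}, $\mathrm{d}\Theta=\mathrm{d}Q$. Introduce the auxiliary $1$-form
\begin{gather*}
A:=\int_G\left(\int_0^1\Phi^*_{\exp(\tau a)}\mu_a\,\mathrm{d}\tau\right)\mathrm{d}g.
\end{gather*}
Splitting $\mu_a=(\mu_a)_{1,0}+(\mu_a)_{0,1}$ and using linearity of the integrals, one reads off $A=\Theta-Q$. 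Since each $\mu_a$ is closed, so is $\Phi^*_{\exp(\tau a)}\mu_a$; hence $\mathrm{d}A=0$ and therefore $\mathrm{d}\Theta=\mathrm{d}Q$. Consequently $\overline{D}_\varepsilon=\tau_{-\mathrm{d}Q}(D_\varepsilon)$ with $Q\in\Gamma(\mathbb{V}^0)$.

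With the gauge transformation in this form, Lemma~\ref{lema31} directly produces the almost-coupling normal bundle~\eqref{IN}; the identity $P^\sharp\mathrm{d}Q(X)=P^\sharp(\mathbf{i}_X\mathrm{d}_{0,1}Q)$ used here holds because $P^\sharp$ kills $\mathbb{V}^0$, eliminating the $\mathrm{d}_{1,0}Q$-term. To promote $\overline{D}_\varepsilon|_N$ to the graph of a Poisson tensor, I apply Lemma~\ref{lem21} with $\Pi=\Pi_\varepsilon$ and $B=-\mathrm{d}Q$; the needed invertibility of $\operatorname{Id}+(\mathrm{d}Q)^\flat\circ\Pi_\varepsilon^\sharp$ is first verified at $\varepsilon=0$. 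Written in the bigrading $T^*M=\mathbb{V}^0\oplus\mathbb{H}^0$, the endomorphism $(\mathrm{d}Q)^\flat\circ P^\sharp$ annihilates the $\mathbb{V}^0$-summand (because $P^\sharp|_{\mathbb{V}^0}=0$) and maps $\mathbb{H}^0$ into $\mathbb{V}^0$ (since $P^\sharp$ lands in $\mathbb{V}$, and the contraction of the bidegree-$(1,1)$ form $\mathrm{d}_{0,1}Q$ with a vertical vector lies in $\mathbb{V}^0$). This endomorphism is therefore strictly upper-triangular, so $\operatorname{Id}+(\mathrm{d}Q)^\flat\circ P^\sharp$ is unipotent and invertible pointwise. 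By continuity in $\varepsilon$ and compactness of $\overline{N}$, invertibility persists on $N$ for all sufficiently small $\varepsilon>0$, and Lemma~\ref{lem21} delivers $\overline{\Pi}_\varepsilon$ with the anchor~\eqref{AP1}. Evaluation at $\varepsilon=0$, using once more $P^\sharp|_{\mathbb{V}^0}=0$, gives $\overline{\Pi}_0^\sharp=P^\sharp$, that is, $\overline{\Pi}_0=P$. The $G$-invariance of $\overline{\Pi}_\varepsilon$ is inherited from $\overline{D}_\varepsilon$ by Lemma~\ref{lema22}(b).

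The most delicate point I foresee is verifying $G$-invariance of the specific normal bundle~$\overline{\mathbb{H}}$ in~\eqref{IN}, since the input $\mathbb{H}$ need not itself be $G$-invariant. The key observation is that $Q$, being a $G$-average, is $G$-invariant, so $\mathrm{d}Q$ is $G$-invariant; combined with the $G$-invariance of~$P$ (a consequence of~\eqref{AT} together with closedness of $\mu_a$), the bundle endomorphism $L\colon X\mapsto X+P^\sharp\mathrm{d}Q(X)$ intertwines the $G$-action. My plan is to reduce to the case of a $G$-invariant $\mathbb{H}$ by first $G$-averaging an Ehresmann connection associated to $\mathbb{H}$ as in Section~2.3, exploiting that the bidegree-$(0,2)$ component of $\Pi_\varepsilon$ is the $G$-invariant tensor $P\in\Gamma(\wedge^2\mathbb{V})$; once $\mathbb{H}$ is $G$-invariant, the image $L(\mathbb{H})=\overline{\mathbb{H}}$ is $G$-invariant, closing the argument.
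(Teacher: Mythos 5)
Most of your argument follows the paper's own route and is sound: you reduce to Lemma~\ref{lem21} with $B=-\mathrm{d}Q$, check invertibility of $\operatorname{Id}+(\mathrm{d}Q)^{\flat}\circ\Pi_{\varepsilon}^{\sharp}$ at $\varepsilon=0$ via the strict upper-triangularity (nilpotency) of $(\mathrm{d}Q)^{\flat}\circ P^{\sharp}$, extend by compactness of $\overline{N}$, deduce $\overline{\Pi}_0=P$ from $P^{\sharp}\circ(\mathrm{d}Q)^{\flat}\circ P^{\sharp}=0$, and invoke Lemma~\ref{lema31} for the almost-coupling property. Your derivation of $\mathrm{d}\Theta=\mathrm{d}Q$ via the auxiliary form $A=\Theta-Q=\int_G\bigl(\int_0^1\Phi^{*}_{\exp(\tau a)}\mu_a\,\mathrm{d}\tau\bigr)\mathrm{d}g$, which is closed because each $\mu_a$ is, is in fact tidier than the paper's computation (the paper splits $\mathrm{d}\mu=0$ into its three bigraded components and manipulates those); the two are equivalent.

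The genuine gap is in your last paragraph, on the $G$-invariance of $\overline{\mathbb{H}}$. First, the assertion that $Q$ is $G$-invariant ``being a $G$-average'' is not justified: the integrand in \eqref{AP2} is $\int_0^1\Phi^{*}_{\exp(\tau a)}(\mu_a)_{1,0}\,\mathrm{d}\tau$, which depends on $g=\exp a$ but is \emph{not} of the form $\Phi_g^{*}(\text{fixed tensor})$, so the usual change-of-variables argument for invariance of averages does not apply. Second, the proposed reduction to a $G$-invariant $\mathbb{H}$ is circular: if you replace $\mathbb{H}$ by $\ker\langle\gamma\rangle^{G}$, the splitting \eqref{Spl2} changes, hence the decomposition $\mu=\mu_{1,0}+\mu_{0,1}$ changes, hence $Q$ in \eqref{AP2} and the bundle $\overline{\mathbb{H}}$ in \eqref{IN} change — you would be proving invariance of a different object than the one in the statement — and moreover the hypothesis \eqref{O1} that $\Pi_{\varepsilon}$ is almost-coupling via the new normal bundle is not guaranteed. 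The paper closes this step differently: it invokes the identity $\overline{\mathbb{H}}=\ker\langle\gamma\rangle^{G}$ for the \emph{original} $\mathbb{H}$ (a nontrivial fact established in the reference \cite{JVYV-14}, relating the averaged connection to the gauge-transformed horizontal bundle through the very same $Q$), and then $G$-invariance is immediate because $\langle\gamma\rangle^{G}$ is a $G$-invariant connection. Without that identification, or an independent proof that the specific $Q$ of \eqref{AP2} is $G$-invariant, the invariance claim for \eqref{IN} is unproved in your write-up.
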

\begin{proof}
Under the hypothesis that the $G$-action is locally Hamiltonian, let us f\/ix a $G$-invariant relatively compact domain $N\subseteq M$. Let $B=-\mathrm{d}\Theta$, with~$\Theta$ constructed as in~\eqref{MF}. By Lemma~\ref{lem21} it suf\/f\/ices to show that there exists a $0<\delta <1$ such that
\begin{gather}
\big( \operatorname{Id}-B^{\flat}\circ\Pi_{\varepsilon}^{\natural}\big) \mbox{ is invertible on }N\mbox{ for }\varepsilon\in
[0,\delta].\label{AP3}
\end{gather}
In terms of the bigraded components of $\mu$, the closedness condition $\mathrm{d}\mu=0$ splits into
\begin{gather*}
\mathrm{d}_{0,1}\mu_{0,1}=0,\qquad \mathrm{d}_{1,0}\mu_{0,1}=-\mathrm{d}_{0,1}\mu_{1,0},
\qquad \mathrm{d}_{2,-1}\mu_{0,1}=-\mathrm{d}_{1,0}\mu_{1,0}.
\end{gather*}
By using these relations, we see from \eqref{MF} that $B=-\mathrm{d}\Theta=-\mathrm{d}Q$, where $Q\in\Gamma(\mathbb{V}^{0})$ is given by~\eqref{AP2}. It follows that $B=B_{2,0}+B_{1,1}$, where $B_{2,0}=-\mathrm{d}_{1,0}Q$ and $B_{1,1}=-\mathrm{d}_{0,1}Q$. From here, taking into account~\eqref{SD2}, for an arbitrary $\alpha=\alpha_{1,0}+\alpha_{0,1}\in\Omega^{1}(M)$, we get
\begin{gather*}
\big(B^{\flat}\circ\Pi_{\varepsilon}^{\natural}\big)(\alpha_{1,0}+\alpha _{0,1})=B_{1,1}^{\flat}\circ P^{\natural}\alpha_{0,1}+\varepsilon\big(
B_{2,0}^{\flat}\circ(\Lambda_{\varepsilon})_{2,0}^{\natural}\alpha _{1,0}+B_{1,1}^{\flat}\circ(\Lambda_{\varepsilon})_{2,0}^{\natural}\alpha_{1,0}\big) .
\end{gather*}
This shows that the matrix of the morphism $\operatorname{Id}-B^{\flat}\circ\Pi_{\varepsilon}^{\natural}\colon T^{\ast}M\rightarrow T^{\ast}M$, in a local
basis compatible with the splitting~\eqref{Spl2}, has the form
\begin{gather*}
\begin{pmatrix}
I & \ast\\
0 & I
\end{pmatrix}
 +O(\varepsilon).
\end{gather*}
This fact, together with a standard compactness argument, proves~\eqref{AP3}. Furthermore, it follows that
\begin{gather}\label{R1}%
\big(\operatorname{Id}+(\mathrm{d}Q)^{\flat}\circ P^{\sharp}\big)^{-1}=\operatorname{Id}-(\mathrm{d}Q)^{\flat}\circ P^{\sharp},
\end{gather}
and
\begin{gather}\label{R2}
P^{\sharp}\circ(dQ)^{\flat}\circ P^{\sharp}=0.
\end{gather}
From here and \eqref{AP1}, we conclude that $\overline{\Pi}_{0}=P$. Finally, to show that the Poisson structure~$\overline{\Pi}_{\varepsilon}$ is~$\mathcal{F}$-almost coupling, we consider the connection $\gamma$ associated to the normal bundle $\mathbb{H}$ and its $G$-average $ \langle\gamma \rangle^{G}$. Then, the normal bundle $\ker \langle\gamma \rangle^{G}$ just coincides with $\overline{\mathbb{H}}$ in~\eqref{IN} (see~\cite{JVYV-14}). Applying Lemma~\ref{lema31} ends the proof.
\end{proof}

Consider now the case of $\Pi_{\varepsilon}$ a coupling Poisson structure for $\varepsilon\neq0$, that is, $\mathbb{H}= \Pi_{\varepsilon}^{\sharp}(\mathbb{V}^{0})$ is a~normal bundle of~$\mathcal{F}$. Then, the corresponding Dirac structure is represented as
\begin{gather*}
D_{\varepsilon}=\operatorname*{Graph}(\Pi_{\varepsilon})=\big\{\big(\varepsilon
X+P^{\sharp}\alpha,\alpha-\mathbf{i}_{X}\sigma_{\varepsilon}\big)\,|\, X\in \Gamma(\mathbb{H}),\ \alpha\in\Gamma\big(\mathbb{H}^{0}\big)\big\},
\end{gather*}
where the $2$-form $\sigma_{\varepsilon}\in\Gamma(\wedge^{2}\mathbb{V}^{0})$ smoothly depends in $\varepsilon$, satisf\/ies the $\mathbb{H}$-covariant constancy condition $\mathrm{d}_{1,0}\sigma_{\varepsilon}=0$, and has the following expansion around $\varepsilon=0$:
\begin{gather*}
\sigma_{\varepsilon}=c+\varepsilon\sigma+O\big(\varepsilon^{2}\big).
\end{gather*}
Here, the $2$-form $c\in\Gamma(\wedge^{2}\mathbb{V}^{0})$ takes values in the Casimir functions of $P$,
\begin{gather*}
c(X,Y)\in\operatorname*{Casim}(M,P)\quad \mbox{for all }X,Y\in\Gamma_{\operatorname*{pr}}(\mathbb{H}),
\end{gather*}
and the $2$-form $\sigma\in\Gamma(\wedge^{2}\mathbb{V}^{0})$ satisf\/ies the curvature identity \eqref{Cur} with $\Pi_{0,2}=P$ (for more details, see \cite{Va-04,JVYV-14}). From these remarks and Theorem \ref{teo42}, we deduce the following consequence.

\begin{Corollary}
In the coupling case, for a locally Hamiltonian $G$-action, the $G$-average $\overline{D}_{\varepsilon}$ is an $\mathcal{F}$-coupling Dirac structure via the invariant normal bundle $\mathbb{\bar{H}}$ given by
\begin{gather*}
\overline{D}_{\varepsilon}=\big\{\big(\varepsilon X+P^{\sharp}\alpha,\alpha-\mathbf{i}
_{X}\sigma_{\varepsilon}+\mathbf{i}_{\varepsilon X+P^{\sharp}\alpha}\mathrm{d}Q\big)\,|\,
X\in\Gamma(\mathbb{H}),\ \alpha\in\Gamma\big(\mathbb{H}^{0}\big)\big\}.
\end{gather*}
In particular, $\overline{D}_{0}=\operatorname*{Graph}(P)$.
\end{Corollary}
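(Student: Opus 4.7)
The plan is to specialize the $G$-averaging formula~\eqref{AD1} and the data assembled in the proof of Theorem~\ref{teo42} to the explicit coupling parametrization of $D_{\varepsilon}$ given just above, and then to upgrade from almost-coupling to coupling by a rank count. Concretely, formula~\eqref{AD1} replaces the $1$-form component $\beta$ of each pair $(Y,\beta)\in D_{\varepsilon}$ by $\beta+\mathbf{i}_{Y}\mathrm{d}\Theta$. The proof of Theorem~\ref{teo42} establishes $\mathrm{d}\Theta=\mathrm{d}Q$, with $Q\in\Gamma(\mathbb{V}^{0})$ given by~\eqref{AP2} (this is where local Hamiltonicity enters). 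Substituting $Y=\varepsilon X+P^{\sharp}\alpha$ and $\beta=\alpha-\mathbf{i}_{X}\sigma_{\varepsilon}$ yields exactly the expression displayed in the statement.

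For the coupling property, Lemma~\ref{lema31} applied to the exact gauge transformation with $B=-\mathrm{d}Q$ immediately gives that $\overline{D}_{\varepsilon}$ is $\mathcal{F}$-almost-coupling via the normal bundle $(\operatorname{Id}+P^{\sharp}\circ B_{1,1}^{\flat})(\mathbb{H})$, which coincides with the $G$-invariant bundle $\overline{\mathbb{H}}$ of~\eqref{IN}. I expect the main technical step to be the upgrade from almost-coupling to coupling: since $P$ is leaf-tangent, $P^{\sharp}$ takes values in~$\mathbb{V}$, so $\operatorname{Id}+P^{\sharp}\circ B_{1,1}^{\flat}$ equals the identity modulo~$\mathbb{V}$. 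It is therefore injective on~$\mathbb{H}$, and a short rank count gives $\operatorname{rank}\overline{\mathbb{H}}=\operatorname{rank}\mathbb{H}=\operatorname{codim}\mathcal{F}$ together with $\overline{\mathbb{H}}\cap\mathbb{V}=\{0\}$, so $\overline{\mathbb{H}}$ is a full normal bundle of~$\mathcal{F}$ and $\overline{D}_{\varepsilon}$ is $\mathcal{F}$-coupling via it.

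Finally, the limit $\overline{D}_{0}=\operatorname{Graph}(P)$ is immediate from $\overline{\Pi}_{0}=P$ in Theorem~\ref{teo42}. Alternatively, it can be read off the stated formula directly: setting $\varepsilon=0$ and using identity~\eqref{R2}, namely $P^{\sharp}\circ(\mathrm{d}Q)^{\flat}\circ P^{\sharp}=0$, the typical element $(P^{\sharp}\alpha,\alpha-\mathbf{i}_{X}c+\mathbf{i}_{P^{\sharp}\alpha}\mathrm{d}Q)$ has $P^{\sharp}$-image equal to $P^{\sharp}\alpha$ and so lies in $\operatorname{Graph}(P)$; equality of the two maximal isotropic families then follows from dimension.
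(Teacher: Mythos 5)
Your proposal is correct and follows essentially the same route the paper intends: the corollary is deduced by substituting the explicit coupling parametrization of $D_{\varepsilon}$ into the gauge-transformation formula~\eqref{AD1}, using $\mathrm{d}\Theta=\mathrm{d}Q$ from the proof of Theorem~\ref{teo42}, and invoking Lemma~\ref{lema31} together with the identities \eqref{R1}--\eqref{R2} for the $\varepsilon=0$ limit. The only point where you argue independently is the upgrade from almost-coupling to coupling, which you establish by a direct rank count on $\overline{\mathbb{H}}=(\operatorname{Id}+P^{\sharp}\circ B_{1,1}^{\flat})(\mathbb{H})$, whereas the paper simply cites the fact (stated after Lemma~\ref{lema31}) that exact gauge transformations of the form~\eqref{EG1} preserve the class of $\mathcal{F}$-coupling Dirac structures; your argument is a correct, self-contained justification of that fact.
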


A natural question concerns the relationship between the original, $\varepsilon$-dependent, Poisson structure $\Pi_{\varepsilon}$, and its $G$-average $\overline{\Pi}_{\varepsilon}$, as $\varepsilon$ tends to~$0$.

\begin{Theorem}\label{teo43}
Let $\overline{\Pi}_{\varepsilon}$ be the $G$-average of the $\mathcal{F}$-almost-coupling Poison structure $\Pi_{\varepsilon}$ defined in~\eqref{AP1}. Let $N\subset M$ be a $G$-invariant open subset whose closure is compact. There exists a~smooth isotopy $\phi_{\varepsilon}\colon N\rightarrow M$, such that, for $\varepsilon$ sufficiently small,
\begin{gather*}
\phi_{\varepsilon}^{\ast}\Pi_{\varepsilon}=\overline{\Pi}_{\varepsilon},\qquad \phi_{0}=\operatorname*{Id}
\end{gather*}
on $N$.
\end{Theorem}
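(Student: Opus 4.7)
The plan is to use Moser's trick on a family of Poisson tensors that interpolates between the $G$-average $\overline{\Pi}_{\varepsilon}$ and the original $\Pi_{\varepsilon}$ by exact gauge transformations. Concretely, for $t\in[0,1]$ define
\begin{gather*}
\Pi^{t}_{\varepsilon}:=\tau_{-(1-t)\mathrm{d}Q}(\Pi_{\varepsilon}),\qquad (\Pi^{t}_{\varepsilon})^{\sharp}=\Pi_{\varepsilon}^{\sharp}\circ\bigl(\operatorname{Id}+(1-t)(\mathrm{d}Q)^{\flat}\circ\Pi_{\varepsilon}^{\sharp}\bigr)^{-1},
\end{gather*}
where $Q\in\Gamma(\mathbb{V}^{0})$ is the $1$-form \eqref{AP2}. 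The compactness/invertibility argument of \eqref{AP3}, applied uniformly in the parameter $(1-t)\in[0,1]$, furnishes a $\delta>0$ for which $\Pi^{t}_{\varepsilon}$ is a smooth family of Poisson tensors on $N$ for $(t,\varepsilon)\in[0,1]\times[0,\delta]$, with $\Pi^{0}_{\varepsilon}=\overline{\Pi}_{\varepsilon}$ (by Theorem~\ref{teo42}) and $\Pi^{1}_{\varepsilon}=\Pi_{\varepsilon}$.

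Differentiating the sharp map in $t$ and using $\Pi_{\varepsilon}^{\sharp}\circ(\operatorname{Id}+(1-t)L)^{-1}=(\Pi^{t}_{\varepsilon})^{\sharp}$ twice (with $L:=(\mathrm{d}Q)^{\flat}\circ\Pi_{\varepsilon}^{\sharp}$) yields the clean identity
\begin{gather*}
\frac{\mathrm{d}}{\mathrm{d}t}(\Pi^{t}_{\varepsilon})^{\sharp}=(\Pi^{t}_{\varepsilon})^{\sharp}\circ(\mathrm{d}Q)^{\flat}\circ(\Pi^{t}_{\varepsilon})^{\sharp}.
\end{gather*}
The key step is to rewrite the right-hand side as $-\bigl(\mathcal{L}_{Y^{t}_{\varepsilon}}\Pi^{t}_{\varepsilon}\bigr)^{\sharp}$, where
\begin{gather*}
Y^{t}_{\varepsilon}:=(\Pi^{t}_{\varepsilon})^{\sharp}Q,
\end{gather*}
using the standard Poisson identity $\bigl(\mathcal{L}_{\Pi^{\sharp}\alpha}\Pi\bigr)^{\sharp}=-\Pi^{\sharp}\circ(\mathrm{d}\alpha)^{\flat}\circ\Pi^{\sharp}$, valid for any Poisson tensor $\Pi$ and $1$-form $\alpha$ (a direct consequence of the Jacobi identity for $\Pi$). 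This gives the Moser-type equation $\frac{\mathrm{d}}{\mathrm{d}t}\Pi^{t}_{\varepsilon}+\mathcal{L}_{Y^{t}_{\varepsilon}}\Pi^{t}_{\varepsilon}=0$.

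Since $\overline{N}$ is compact and $Y^{t}_{\varepsilon}$ depends smoothly on $(t,\varepsilon)$, shrinking $\delta$ if necessary, the time-dependent flow $\phi^{t}_{\varepsilon}\colon N\to M$ of $Y^{t}_{\varepsilon}$ starting at $\phi^{0}_{\varepsilon}=\operatorname{Id}$ exists up to $t=1$. The standard Moser computation
\begin{gather*}
\frac{\mathrm{d}}{\mathrm{d}t}\bigl((\phi^{t}_{\varepsilon})^{\ast}\Pi^{t}_{\varepsilon}\bigr)=(\phi^{t}_{\varepsilon})^{\ast}\Bigl(\mathcal{L}_{Y^{t}_{\varepsilon}}\Pi^{t}_{\varepsilon}+\frac{\mathrm{d}\Pi^{t}_{\varepsilon}}{\mathrm{d}t}\Bigr)=0
\end{gather*}
then yields $(\phi^{1}_{\varepsilon})^{\ast}\Pi_{\varepsilon}=\overline{\Pi}_{\varepsilon}$, so the isotopy $\phi_{\varepsilon}:=\phi^{1}_{\varepsilon}$ has the required property. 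To verify $\phi_{0}=\operatorname{Id}$, note that $P\in\Gamma(\wedge^{2}\mathbb{V})$ and $Q\in\Gamma(\mathbb{V}^{0})$ force $P^{\sharp}Q=0$; multiplying $P^{\sharp}$ into the invertibility relation and using \eqref{R1}--\eqref{R2} then gives $(\Pi^{t}_{0})^{\sharp}=P^{\sharp}$ for every $t$, so $Y^{t}_{0}\equiv 0$ and the associated flow is the identity. I anticipate the main point requiring care to be the Moser identity $\frac{\mathrm{d}}{\mathrm{d}t}\Pi^{t}_{\varepsilon}=-\mathcal{L}_{Y^{t}_{\varepsilon}}\Pi^{t}_{\varepsilon}$, i.e., the Jacobi-based identification of $-(\Pi^{t}_{\varepsilon})^{\sharp}(\mathrm{d}Q)^{\flat}(\Pi^{t}_{\varepsilon})^{\sharp}$ with $\mathcal{L}_{(\Pi^{t}_{\varepsilon})^{\sharp}Q}\Pi^{t}_{\varepsilon}$; once this is in place, the remaining ingredients (uniform invertibility, smooth parameter dependence of ODE flows on the relatively compact neighborhood~$N$) are refinements of arguments already present in the proof of Theorem~\ref{teo42}.
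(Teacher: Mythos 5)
Your proposal is correct and takes essentially the same route as the paper: both interpolate between $\Pi_{\varepsilon}$ and $\overline{\Pi}_{\varepsilon}$ through the one-parameter family of exact gauge transformations generated by $\mathrm{d}Q$, apply Moser's homotopy method with the time-dependent vector field $\pm\Pi_{\varepsilon,t}^{\sharp}Q$, and use $P^{\sharp}Q=0$ (from $Q\in\Gamma(\mathbb{V}^{0})$, $P\in\Gamma(\wedge^{2}\mathbb{V})$) to get $\phi_{0}=\operatorname{Id}$. The only cosmetic differences are that you run the homotopy parameter in the opposite direction, which yields $\phi_{\varepsilon}^{\ast}\Pi_{\varepsilon}=\overline{\Pi}_{\varepsilon}$ exactly as stated, and that you write out the Jacobi-identity verification of the homotopy equation that the paper delegates to~\cite{FrMa13,JVYV-14}.
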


\begin{proof} Following the reasoning in the proof of Theorem~\ref{teo42}, we can show that there exists a~$\delta>0$ such that the gauge transformation of~$\Pi_{\varepsilon}$, associated to the exact $2$-form $tB=-t\mathrm{d}Q$, exists for all $\varepsilon \in [0,\delta]$, $t\in [0,1]$, and gives the $2$-parameter family of Poisson structures on~$N$,~$\Pi_{\varepsilon,t}$, characterized by
\begin{gather}\label{PE}
\Pi^\sharp_{\varepsilon,t}=\Pi_{\varepsilon}^{\sharp}\circ\big(\operatorname{Id}+t(\mathrm{d}Q)^{\flat}\circ\Pi_{\varepsilon}^{\sharp}\big)^{-1}.
\end{gather}
Then, $\Pi_{\varepsilon,0}=\Pi_{\varepsilon}$ and $\Pi_{\varepsilon,1} =\overline{\Pi}_{\varepsilon}$. Fixing $\varepsilon\in\lbrack0,\delta]$, one can verify~\cite{FrMa13,JVYV-14} that the time-dependent vector f\/ield on $N$ given by
\begin{gather}\label{He1}
Z_{\varepsilon,t}=-\Pi_{\varepsilon,t}^{\sharp}(Q)=-\Pi_{\varepsilon}^{\sharp} \circ\big(\operatorname{Id}+t(\mathrm{d}Q)^{\flat}\circ\Pi_{\varepsilon}^{\sharp}\big)^{-1}(Q)
\end{gather}
satisf\/ies the homotopy equation (where $[\![\cdot ,\cdot ]\!]$ denotes the Schouten bracket for multivector f\/ields on~$M$~\cite{Va-94})
\begin{gather}\label{He2}%
[\![ Z_{\varepsilon,t},\Pi_{\varepsilon,t}]\!]= -\frac{\mathrm{d}\Pi_{\varepsilon,t}}{\mathrm{d}t}.
\end{gather}
Moreover, we have $\Pi_{0,t}=P$ and $Z_{0,t}=-P^{\sharp}(Q)=0$. These facts, together with the compactness of the closure of~$N$, show that (by shrinking $\delta >0$ if necessary), for each $\varepsilon\in [0,\delta]$, the f\/low~$\operatorname*{Fl}_{Z_{\varepsilon,t}}^{t}$ of $Z_{\varepsilon,t}$ is well-def\/ined on $N$, for all $t\in [0,1]$. Then, it suf\/f\/ices to put $\phi_{\varepsilon}= \operatorname*{Fl}_{Z_{\varepsilon,t}}^{t}|_{t=1}$.
\end{proof}

\section{Inf\/initesimal deformations}

In this section, we will derive a f\/irst-order approximation formula for the averaged Poisson structure in \eqref{AP1}.

Let $\Pi_{\varepsilon}=P+\varepsilon\Lambda_{0}+O(\varepsilon ^{2})$ be a family of almost-coupling Poisson structures on $M$. From the Jacobi identity $[\![\Pi_{\varepsilon},\Pi_{\varepsilon}]\!]=0=[\![P,P]\!]$, and it follows that the bivector f\/ield $\Lambda_{0}\in\Gamma(\wedge^{2}\mathbb{H})$ is a~$2$-cocycle in the Lichnerowicz--Poisson complex of $(M,P)$, $[\![P,\Lambda_{0}]\!]=0$. This $2$-cocycle $\Lambda_{0}$, determines the inf\/initesimal (f\/irst-order) part of the almost-coupling deformation of $P$. Then, taking into account the identities~\eqref{R1} and~\eqref{R2}, we deduce from~\eqref{AP1}
\begin{gather}\label{He3}
\overline{\Pi}_{\varepsilon}=P+\varepsilon\overline{\Lambda}_{0}+O\big(\varepsilon^{2}\big),
\end{gather}
where $\overline{\Lambda}_{0}$ is another $G$-invariant $2$-cocycle in the Lichnerowicz--Poisson complex, given by
\begin{gather*}
\overline{\Lambda}_{0}^{\natural}=\big( \operatorname{Id}-P^{\sharp}\circ(\mathrm{d}Q)^{\flat}\big) \circ\Lambda_{0}^{\natural}\circ\big(
\operatorname{Id}-(\mathrm{d}Q)^{\flat}\circ P^{\sharp}\big).
\end{gather*}
By varying \eqref{He3} with respect to $\varepsilon$, we get the following inf\/initesimal version of Theorem~\ref{teo43}.

\begin{Proposition}
The cohomology classes of the $2$-cocycles $\Lambda_{0}$ and $\overline{\Lambda}_{0}$, coincide.
\end{Proposition}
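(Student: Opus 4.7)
The plan is to extract the infinitesimal statement directly from Theorem~\ref{teo43} combined with the expansion~\eqref{He3}. Theorem~\ref{teo43} produces an isotopy $\phi_\varepsilon\colon N\to M$ satisfying $\phi_0=\operatorname{Id}$ and $\phi_\varepsilon^\ast\Pi_\varepsilon=\overline{\Pi}_\varepsilon$ for sufficiently small $\varepsilon$, so I would read off the cohomological relation by simply differentiating this identity at $\varepsilon=0$.

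Setting $W:=\frac{\mathrm{d}}{\mathrm{d}\varepsilon}\big|_{\varepsilon=0}\phi_\varepsilon\in\mathfrak{X}(N)$, the initial velocity of the isotopy, and using $\Pi_\varepsilon=P+\varepsilon\Lambda_0+O\big(\varepsilon^2\big)$ together with $\phi_0=\operatorname{Id}$, the standard expansion of the pullback yields
\begin{gather*}
\phi_\varepsilon^\ast\Pi_\varepsilon=\phi_\varepsilon^\ast P+\phi_\varepsilon^\ast(\Pi_\varepsilon-P)=P+\varepsilon\bigl(\mathcal{L}_W P+\Lambda_0\bigr)+O\big(\varepsilon^2\big).
\end{gather*}
Matching this against $\overline{\Pi}_\varepsilon=P+\varepsilon\overline{\Lambda}_0+O\big(\varepsilon^2\big)$ from~\eqref{He3} at order $\varepsilon^{1}$ gives
\begin{gather*}
\overline{\Lambda}_0-\Lambda_0=\mathcal{L}_W P=[\![ W,P]\!],
\end{gather*}
which is a coboundary in the Lichnerowicz--Poisson complex of $(M,P)$, so $\Lambda_0$ and $\overline{\Lambda}_0$ represent the same cohomology class.

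The main obstacle is verifying that $W$ is a genuine, globally defined smooth vector field on $N$ rather than a merely formal symbol. This is transparent from the explicit construction $\phi_\varepsilon=\operatorname*{Fl}^{1}_{Z_{\varepsilon,t}}$ used in the proof of Theorem~\ref{teo43}: the generator $Z_{\varepsilon,t}$ in~\eqref{He1} depends smoothly on $\varepsilon$ and vanishes identically at $\varepsilon=0$ (since $P^\sharp(Q)=0$ for $Q\in\Gamma(\mathbb{V}^0)$ and $P\in\Gamma(\wedge^{2}\mathbb{V})$), so the flow admits a first-order expansion $\phi_\varepsilon=\operatorname{Id}+\varepsilon W+O\big(\varepsilon^2\big)$ with $W=\int_0^1\frac{\partial Z_{\varepsilon,t}}{\partial\varepsilon}\big|_{\varepsilon=0}\,\mathrm{d}t\in\mathfrak{X}(N)$. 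For the cohomological conclusion one does not need an explicit formula for $W$; only its existence as a global smooth section of $TN$ matters.
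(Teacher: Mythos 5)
Your argument is correct and is essentially the paper's own proof in integrated form: the authors expand the homotopy data $Z_{\varepsilon,t}=\varepsilon W_{t}+O(\varepsilon^{2})$ and $\Pi_{\varepsilon,t}=P+\varepsilon\Psi_{t}+O(\varepsilon^{2})$ inside the homotopy equation~\eqref{He2}, collect first-order terms, and integrate in $t$ to obtain $\overline{\Lambda}_{0}-\Lambda_{0}=[\![ w,P]\!]$ with $w=-\int_{0}^{1}W_{t}\,\mathrm{d}t$, which is exactly the coboundary you reach by instead differentiating the integrated identity $\phi_{\varepsilon}^{\ast}\Pi_{\varepsilon}=\overline{\Pi}_{\varepsilon}$ of Theorem~\ref{teo43} at $\varepsilon=0$ (your check that $Z_{0,t}=-P^{\sharp}(Q)=0$ guarantees the expansion $\phi_{\varepsilon}=\operatorname{Id}+\varepsilon W+O(\varepsilon^{2})$ with $W=\int_{0}^{1}W_{t}\,\mathrm{d}t$ a genuine vector field). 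The only discrepancy is an overall sign in the primitive, traceable to the direction of the Moser isotopy; it is harmless, since either sign exhibits $\overline{\Lambda}_{0}-\Lambda_{0}$ as a Lichnerowicz--Poisson coboundary.
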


\begin{proof}
For the inf\/initesimal generator $Z_{\varepsilon,t}$ in \eqref{He1}, and the family of Poisson structures $\Pi_{\varepsilon,t}$ in \eqref{PE}, we can evaluate their expansion around $\varepsilon =0$: $Z_{\varepsilon,t}=\varepsilon W_{t}+O(\varepsilon^{2})$ and $\Pi_{\varepsilon,t}=P+\varepsilon\Psi _{t}+O(\varepsilon^{2})$, where $W_{t}$ and $\Psi_{t}$ are a time-dependent vector f\/ield and a bivector f\/ield, respectively. Putting these expansions into \eqref{He2}, and collecting all f\/irst-order terms in $\varepsilon$, leads to the equation $[\![W_{t},P]\!]=-\frac{\mathrm{d}\Psi_{t}}{\mathrm{d}t}$. Integrating this equation with respect to $t$, and taking into account that $\Psi_{0}=\Lambda_{0}$, and $\Psi_{1}=\overline{\Lambda}_{0}$, we get $\overline{\Lambda}_{0}-\Lambda_{0}=[\![w,P]\!]$, where $w=-\int_{0}^{1}W_{t}\,\mathrm{d}t$.
\end{proof}
\begin{Remark}\label{lastrem}
These results can be used to construct normal forms for perturbed dynamics associated to almost-coupling deformations of foliated Poisson
manifolds with symmetry. Typically, such perturbed dynamics appear in the context of adiabatic theory~\cite{ArKN-88} on nontrivial phase
spaces, particularly those in which no global action-angle coordinates can be introduced \cite{MJYu-13,MJYu-13a,MYU-16}.
\end{Remark}
We have a simple example of an $\mathcal{F}$-almost-coupling Poisson deformation when the foliation $\mathcal{F}$ is a f\/ibration.
\begin{Example}\label{examp51}
Suppose we start with a Poisson f\/iber bundle $(\pi\colon M\rightarrow S,P)$ whose base $S$ carries a Poisson structure $\psi\in\Gamma(\wedge^{2}TS)$. Assume that there exists a f\/lat Poisson connection~$\gamma$ on~$M$, associated to an integrable distribution $\mathbb{H}$, which is complementary to the vertical one $\mathbb{V}=\ker \mathrm{d}\pi$. Let $\operatorname{hor}^{\gamma}(\psi)\in \Gamma(\wedge^{2}\mathbb{H})$ be the $\gamma$-horizontal lift of the Poisson bivector f\/ield~$\psi$. Then, by using the standard properties of the Schouten bracket (see~\cite{Va-94}), one can show that the bivector f\/ield
\begin{gather}\label{FC}
\Pi_{\varepsilon}=P+\varepsilon\operatorname{hor}^{\gamma}(\psi)
\end{gather}
satisf\/ies the Jacobi identity and gives an almost-coupling Poisson structure for every $\varepsilon\in [0,1]$. It is clear that $\Pi_{\varepsilon}$ is a coupling Poisson tensor on $M$ if and only if the bivector f\/ield $\psi$ is non-degenerate, and hence induces a symplectic form on $S$.
\end{Example}

Finally, let us say some words about the physical meaning of the preceding constructions. Consider the Hamiltonian vector f\/ield
$X(\varepsilon)=\mathbf{i}_{\mathrm{d}F}P+\varepsilon\mathbf{i}_{\mathrm{d}F} \operatorname{hor}^{\gamma}(\psi)$, relative
to a function \smash{$F\in C^{\infty}(M)$} and the Poisson structure~\eqref{FC}. As stated in the Introduction, the corresponding dynamical system
\begin{gather}\label{SL-F1}
\dot{\xi}=\varepsilon\mathbf{i}_{\mathrm{d}F}\operatorname{hor}^{\gamma}(\psi),\qquad \dot{x}=\mathbf{i}_{\mathrm{d}F}P,
\end{gather}
belongs to the class of the so-called \emph{slow-fast Hamiltonian systems}~\cite{ArKN-88}, where the coordinates \smash{$\xi=(\xi^{i})$}, along the base~$S$, and the coordinates $x=(x^{\alpha})$, along the f\/ibers of~$\pi$, are called the slow and fast variables, respectively. We observe that the inf\/initesimal generator of the Poisson isotopy~$\phi_{\varepsilon}$ in Theorem~\ref{teo43}, controls the `geometric' part of an invariant normalization transformation for the system~\eqref{SL-F1} as~$\varepsilon$ tends to zero. As stated in Remark~\ref{lastrem}, these transformations, and the normal forms to which they lead, are crucial for determining asymptotic properties of the system, such as stability, existence of periodic orbits, or the computation of adiabatic invariants, see~\cite{MJYu-13} for some examples.

\subsection*{Acknowledgements}

We express our gratitude to the anonymous referees, whose detailed comments and criticism have greatly improved the contents of this paper. Also, we thank the organizers of the Gone Fishing Meeting at Berkeley (2015), for the opportunity of presenting and discussing some of the results exposed here. This work was partially supported by the Mexican National Council of Science and Technology (CONACyT), under research projects CB-2012-179115 (JAV) and CB-2013-219631 (YuV).

\pdfbookmark[1]{References}{ref}
\LastPageEnding


\begin{thebibliography}{99}
\footnotesize\itemsep=0pt

\bibitem{ArKN-88}
Arnold V.I., Kozlov V.V., Neishtadt A.I., Dynamical systems.~{III},
 \href{https://doi.org/10.1007/978-3-662-02535-2}{\textit{Encyclopaedia of Mathematical Sciences}}, Vol.~3, Springer-Verlag,
 Berlin, 1988.

\bibitem{MJYu-13}
Avenda\~no Camacho M., Vallejo J.A., Vorobiev Yu., Higher order corrections to
 adiabatic invariants of generalized slow-fast {H}amiltonian systems,
 \href{https://doi.org/10.1063/1.4817863}{\textit{J.~Math. Phys.}} \textbf{54} (2013), 082704, 15~pages,
 \href{http://arxiv.org/abs/1305.3974}{arXiv:1305.3974}.

\bibitem{MJYu-13a}
Avenda\~no Camacho M., Vallejo J.A., Vorobiev Yu., A simple global
 representation for second-order normal forms of {H}amiltonian systems
 relative to periodic f\/lows, \href{https://doi.org/10.1088/1751-8113/46/39/395201}{\textit{J.~Phys.~A: Math. Theor.}} \textbf{46}
 (2013), 395201, 12~pages.

\bibitem{MYU-16}
Avenda\~no Camacho M., Vorobiev Yu., On deformations of {P}oisson structures on f\/ibered manifolds and adiabatic slow-fast systems, 
\href{https://doi.org/10.1142/S0219887817500864}{\textit{Int.~J. Geom. Methods Mod. Phys.}} \textbf{14} (2017), 1750086, 15~pages.

\bibitem{BrF}
Brahic O., Fernandes R.L., Poisson f\/ibrations and f\/ibered symplectic groupoids,
 in Poisson Geometry in Mathematics and Physics, \href{https://doi.org/10.1090/conm/450/08733}{\textit{Contemp. Math.}}, Vol.~450, Amer. Math. Soc., Providence, RI, 2008, 41--59, \href{http://arxiv.org/abs/math.DG/0702258}{math.DG/0702258}.

\bibitem{BurRa-03}
Bursztyn H., Radko O., Gauge equivalence of {D}irac structures and symplectic
 groupoids, \href{https://doi.org/10.5802/aif.1945}{\textit{Ann. Inst. Fourier (Grenoble)}} \textbf{53} (2003),
 309--337, \href{http://arxiv.org/abs/math.SG/0202099}{math.SG/0202099}.

\bibitem{Cou-90}
Courant T.J., Dirac manifolds, \href{https://doi.org/10.2307/2001258}{\textit{Trans. Amer. Math. Soc.}} \textbf{319}
 (1990), 631--661.

\bibitem{CuWe-86}
Courant T.J., Weinstein A., Beyond {P}oisson structures, in Action hamiltoniennes
 de groupes. {T}roisi\`eme th\'eor\`eme de {L}ie ({L}yon, 1986),
 \textit{Travaux en Cours}, Vol.~27, Hermann, Paris, 1988, 39--49.

\bibitem{CrFe}
Crainic M., Fernandes R.L., Stability of symplectic leaves, \href{https://doi.org/10.1007/s00222-010-0235-1}{\textit{Invent.
 Math.}} \textbf{180} (2010), 481--533, \href{http://arxiv.org/abs/0810.4437}{arXiv:0810.4437}.

\bibitem{CrMa-12}
Crainic M., M\u{a}rcu\c{t} I., A normal form theorem around symplectic leaves,
 \href{https://doi.org/10.4310/jdg/1354110196}{\textit{J.~Differential Geom.}} \textbf{92} (2012), 417--461,
 \href{http://arxiv.org/abs/1009.2090}{arXiv:1009.2090}.

\bibitem{FrMa13}
Frejlich P., M\u{a}rcu\c{t} I., The normal form theorem around {P}oisson
 transversals, \href{http://arxiv.org/abs/1306.6055}{arXiv:1306.6055}.

\bibitem{GLS-96}
Guillemin V., Lerman E., Sternberg S., Symplectic f\/ibrations and multiplicity
 diagrams, \href{https://doi.org/10.1017/CBO9780511574788}{Cambridge University Press}, Cambridge, 1996.

\bibitem{KMS-93}
Kol{\'a}{\v{r}} I., Michor P.W., Slov{\'a}k J., Natural operations in
 dif\/ferential geometry, \href{https://doi.org/10.1007/978-3-662-02950-3}{Springer-Verlag}, Berlin, 1993.

\bibitem{SeWe-01}
\v{S}evera P., Weinstein A., Poisson geometry with a 3-form background,
 \href{https://doi.org/10.1143/PTPS.144.145}{\textit{Progr. Theoret. Phys. Suppl.}} (2001), 145--154,
 \href{http://arxiv.org/abs/math.SG/0107133}{math.SG/0107133}.

\bibitem{Va-94}
Vaisman I., Lectures on the geometry of {P}oisson manifolds, \href{https://doi.org/10.1007/978-3-0348-8495-2}{\textit{Progress
 in Mathematics}}, Vol.~118, Birkh\"auser Verlag, Basel, 1994.

\bibitem{Va-04}
Vaisman I., Coupling {P}oisson and {J}acobi structures on foliated manifolds,
 \href{https://doi.org/10.1142/S0219887804000307}{\textit{Int.~J. Geom. Methods Mod. Phys.}} \textbf{1} (2004), 607--637,
 \href{http://arxiv.org/abs/math.SG/0402361}{math.SG/0402361}.

\bibitem{Va-06}
Vaisman I., Foliation-coupling {D}irac structures, \href{https://doi.org/10.1016/j.geomphys.2005.05.007}{\textit{J.~Geom. Phys.}}
 \textbf{56} (2006), 917--938, \href{http://arxiv.org/abs/math.SG/0412318}{math.SG/0412318}.

\bibitem{JVYV-14}
Vallejo J.A., Vorobiev Yu., Invariant {P}oisson realizations and the averaging
 of {D}irac structures, \href{https://doi.org/10.3842/SIGMA.2014.096}{\textit{SIGMA}} \textbf{10} (2014), 096, 20~pages,
 \href{http://arxiv.org/abs/1405.0574}{arXiv:1405.0574}.

\bibitem{Vor-08}
Vorobiev Yu., Averaging of {P}oisson structures, in Geometric Methods in
 Physics, \href{https://doi.org/10.1063/1.3043864}{\textit{AIP Conf. Proc.}}, Vol.~1079, Amer. Inst. Phys., Melville,
 NY, 2008, 235--240.

\bibitem{Vo-01}
Vorobjev Yu., Coupling tensors and {P}oisson geometry near a single symplectic
 leaf, in Lie Algebroids and Related Topics in Dif\/ferential Geometry
 ({W}arsaw, 2000), \href{https://doi.org/10.4064/bc54-0-14}{\textit{Banach Center Publ.}}, Vol.~54, Polish Acad. Sci.
 Inst. Math., Warsaw, 2001, 249--274, \href{http://arxiv.org/abs/math.SG/0008162}{math.SG/0008162}.
 
\bibitem{Wa}
Wade A., Poisson f\/iber bundles and coupling {D}irac structures, \href{https://doi.org/10.1007/s10455-007-9079-3}{\textit{Ann.
 Global Anal. Geom.}} \textbf{33} (2008), 207--217, \href{http://arxiv.org/abs/math.SG/0507594}{math.SG/0507594}.

\end{thebibliography}
\end{document}